\pgfplotsset{compat=1.18}
\DeclareMathOperator{\supp}{supp}
\DeclareMathOperator{\sgn}{sgn}
\newcommand{\jBra}[1]{\langle #1 \rangle}
\newcommand{\bbR}{\mathbb{R}}
\newcommand{\bbC}{\mathbb{C}}
\newcommand{\bbOne}{\mathds{1}}
\newcommand{\cF}{\mathcal{F}}
\newtheorem{thm}{Theorem}
\newtheorem{lemma}[thm]{Lemma}
\newtheorem{cor}[thm]{Corollary}
\newtheorem{rmk}{Remark}
\begin{document}
\title[Spatial decay for Benjamin-Ono coherent states]{On spatial decay for coherent states of the Benjamin-Ono equation}
\author[Gavin Stewart]{Gavin Stewart}
 \address[Gavin Stewart]{\newline
        Department of Mathematics, \newline
         Rutgers University, New Brunswick, NJ 08903 USA.}
  \email[]{gavin.stewart@rutgers.edu}
  \thanks{2020 \textit{ Mathematics Subject Classification.}   76B15  70K45  35C07}
\date{July 30th, 2025}
\begin{abstract}
    We consider solutions to the Benjamin-Ono equation
    $$\partial_t u - H \partial_x^2 u = -\partial_x(u^2)$$
    that are localized in a reference frame moving to the right with constant speed.  We show that any such solution that decays at least like $\jBra{x}^{-1-\epsilon}$ for some $\epsilon > 0$ in a comoving coordinate frame must in fact decay like $\jBra{x}^{-2}$.  In view of the explicit soliton solutions, this decay rate is sharp.  Our proof has two main ingredients.  The first is microlocal dispersive estimates for the Benjamin-Ono equation in a moving frame, which allow us to prove spatial decay of the solution provided the nonlinearity has sufficient decay.  The second is a careful normal form analysis, which allows us to obtain rapid decay of the nonlinearity for a transformed equation assuming only modest decay of the solution.  Our arguments are entirely time dependent, and do not require the solution to be an exact traveling wave.
\end{abstract}
\maketitle

\section{Introduction}


\subsection{Background}
We will study coherent states for the Benjamin-Ono equation
\begin{equation}\label{eqn:BO}
    \partial_t u - H \partial_x^2 u = -\partial_x(u^2)
\end{equation}
where $H$ is the Hilbert transform
\begin{equation*}
    \cF(Hf)(\xi) = -i\sgn(\xi) \hat{f}(\xi)
\end{equation*}
The Benjamin-Ono equation was proposed by Benjamin~\cite{benjaminInternalWavesPermanent1967a} and Davis-Acrivos~\cite{davisSolitaryInternalWaves1967a} as a reduced order model for internal waves in two-fluid systems (see the recent work~\cite{paulsenJustificationBenjaminOno2024a} for a mathematically rigorous derivation in this context).  It was also recently shown to describe waves in two-dimensional gravity water waves with constant vorticity up to the natural cubic timescale~\cite{ifrimBenjaminOnoApproximation2D2022b}.  

The Benjamin-Ono equation admits traveling wave solutions
\begin{equation}
    S_c(x,t) = \frac{2c}{c^2(x-ct)^2 + 1}
\end{equation}
which travel to the right with speed $c$.  In contrast to other dispersive models, the solitons for the Benjamin-Ono equation decay only algebraically at spatial infinity, which can be seen as a consequence of the nonsmooth dispersion relation
\begin{equation}\label{eqn:BO-disp-rel}
    \omega(\xi) = |\xi|\xi
\end{equation}
appearing in the symbol $H\partial_x^2 = \omega(D_x)$, $D_x = \frac{1}{i}\partial_x$.  The existence of solitons solutions can be seen as a consequence of the complete integrability of the Benjamin-Ono equations~\cite{fokasBiHamiltonianFormulationKadomtsev1988,fokasInverseScatteringTransform1983a,kaupCompleteIntegrabilityBenjaminOno1998a}, as predicted in~\cite{onoAlgebraicSolitaryWaves1975a}.

Based on the soliton resolution conjecture, we expect that a generic solution to~\eqref{eqn:BO} that is localized in a moving reference must be equal to $S_c$ (up to translation).  For the case of small perturbations of a soliton in $H^{1/2}$, this result was shown by Kenig and Martel in~\cite{kenigAsymptoticStabilitySolitons2009}.  Proving the result in full generality appears difficult; although Benjamin-Ono is completely integrable, using the complete integrability to obtain results about the dynamics is technically quite involved.  Our goal in this paper will be to establish the following result (for a precise statement, see~\Cref{thm:main,thm:main-symb-bds}):
\begin{thm}[Main theorem, rough version]\label{thm:rough-main}
    If $u$ is a solution to~\eqref{eqn:BO} that is localized in a reference frame moving to the right at constant speed, then the tails of $u$ decay at the same rate as the tails of $S_c$ as $|x| \to \infty$.
\end{thm}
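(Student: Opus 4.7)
The plan is to work in the comoving frame by setting $v(y,t) = u(y+ct, t)$, which transforms~\eqref{eqn:BO} into
\begin{equation*}
    \partial_t v - H\partial_y^2 v - c\,\partial_y v = -\partial_y(v^2).
\end{equation*}
The hypothesis now reads that $v$ is uniformly bounded in a weight $\jBra{y}^{1+\epsilon}$, and the aim is to bootstrap this up to the sharp weight $\jBra{y}^{2}$. I would implement this as an iteration: at each step, use decay of the nonlinearity to improve the decay of $v$, then use the improved decay of $v$ to improve the decay of the nonlinearity via a normal-form transformation, and repeat until the $\jBra{y}^{-2}$ ceiling is reached.

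For the linear half of the iteration I would analyze the moving-frame symbol. Its group velocity vanishes at the frequencies $|\xi| = c/2$, that is, at the frequencies whose physical group velocity matches the frame speed. Away from these critical frequencies the propagator should obey microlocal dispersive and local smoothing estimates, which combined with Duhamel's formula convert spatial decay of the forcing into spatial decay of $v$. Concretely, using a Littlewood-Paley partition supported away from $\{|\xi| = c/2\}$ together with commutator identities for $y$ against the moving-frame linear operator, one aims to prove an estimate of the shape: if the forcing satisfies $\jBra{y}^N F \in L^\infty_t L^2_y$ and has frequency support away from the critical frequencies, then $v$ inherits comparable spatial decay, up to a controlled loss near $|\xi| = c/2$.

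For the nonlinear half I would perform a normal-form transformation adapted to the moving frame. The three-wave resonance phase for~\eqref{eqn:BO} is $\Phi(\xi_1,\xi_2) = \omega(\xi_1) + \omega(\xi_2) - \omega(\xi_1+\xi_2)$; the frame contributions telescope, so $\Phi$ reduces to the stationary-frame phase $|\xi_1|\xi_1 + |\xi_2|\xi_2 - |\xi_1+\xi_2|(\xi_1+\xi_2)$, which vanishes only when $\xi_1$ and $\xi_2$ share a sign. Dividing by $\Phi$ off the resonant set defines a bounded bilinear correction $B$ so that $w = v + B(v,v)$ satisfies an equation whose quadratic part is localized near the resonant cone, up to a cubic remainder. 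Since cubic products of a $\jBra{y}^{-1-\epsilon}$ function decay like $\jBra{y}^{-3-3\epsilon}$, feeding $w$ into the linear estimate above yields the needed spatial decay of $w$, which transfers back to $v$ because $B(v,v)$ inherits its decay from $v$.

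The main obstacle I anticipate is the degeneration at the stationary frequencies $|\xi| = c/2$, where both the dispersive smoothing and the normal-form denominator break down. Localizing the bilinear multiplier away from these frequencies handles the bulk of the nonlinearity, but the residual piece is supported precisely on the frequency set associated with the soliton profile, whose own decay rate is the sharp $\jBra{y}^{-2}$ we are trying to establish---so the bootstrap must be calibrated to close exactly at this order and not past it. A secondary technical nuisance is the lack of smoothness of $\omega(\xi) = |\xi|\xi$ at $\xi = 0$, which complicates the pseudodifferential and bilinear calculus at low frequency; I would sidestep this by treating the low-frequency component of $v$ separately via paralinearization.
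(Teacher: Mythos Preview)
Your proposal contains a fundamental error in the linear analysis. You claim that the moving-frame group velocity vanishes at $|\xi| = c/2$, i.e.\ where the physical group velocity matches the frame speed. But the Benjamin-Ono dispersion relation $\omega(\xi) = |\xi|\xi$ has group velocity $\omega'(\xi) = 2|\xi|$ with a definite sign: all linear waves travel to the \emph{left}. Since the frame moves to the \emph{right} with speed $c$, in comoving coordinates every frequency has group velocity $-(c + 2|\xi|) \leq -c < 0$; there are no stationary frequencies whatsoever. This is precisely the mechanism the paper exploits (cf.\ the phase computation $\partial_\xi\phi = (x-y+t) + 2t|\xi| > 0$ in the proof of the refined linear estimate), and it is what distinguishes Benjamin-Ono from, say, NLS in this problem. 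The ``main obstacle'' you identify---degeneration near $|\xi| = c/2$---does not exist, and the soliton's $\jBra{y}^{-2}$ decay is not tied to any such frequency set; it arises instead from the $|\xi|$ singularity in the dispersion relation at $\xi = 0$ (the boundary terms in the paper's integration-by-parts argument).

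Your normal-form step is also too optimistic. The bilinear phase $\Phi(\xi_1,\xi_2)$ for Benjamin-Ono degenerates at high-low interactions (when one input frequency is much smaller than the other), so dividing by $\Phi$ does \emph{not} produce a bounded bilinear correction---this is the well-known obstruction that forces the Tao/Ifrim--Tataru gauge transformation $e^{-i\Phi_{\ll k}}$. The paper's actual normal form combines a paradifferential quadratic correction with this gauge, and then must further replace the exponential by its Taylor polynomial $E_N$ to retain the positive-frequency localization needed for the unidirectional linear estimates. Your sketch addresses neither the high-low degeneracy nor the interaction between the gauge and frequency localization, both of which are where the real work lies.
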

In particular, our theorem does not require that $u$ be close to a soliton.  

The idea of finding spatial bounds for localized solutions to dispersive equations goes back to Tao's work in~\cite{taoGlobalCompactAttractor2008a} for nonlinear Schr\"odinger equations, and was recently revisited by Soffer and Wu in~\cite{soffer2023soliton}.  These works are only able to prove polynomial decay at infinity, which falls far short of the exponential decay of the NLS solitons.  The main obstruction to obtaining better estimates for Schr\"odinger-type equations are low frequencies, which have small group velocities.  For high frequencies (large group velocities), work by Soffer and the author shows that it is possible to get faster decay using an incoming/outgoing decomposition~\cite{sofferScatteringLocalizedStates2024a}.

For the Benjamin-Ono equation, we get improvements because the dispersion relation~\eqref{eqn:BO-disp-rel} only supports (linear) waves traveling to the left.  Thus, if we pass to a frame moving with speed $c$ to the right, all linear waves will move to the left with speed $\geq c$, leading to improved dispersive decay in weighted spaces.  This idea goes back at least to the work of Pego and Weinstein on the asymptotic stability of solitary waves for KdV-type equation~\cite{pegoAsymptoticStabilitySolitary1994}.  Here, we use it to obtain the estimate
\begin{equation}\label{eqn:local-decay-intro}\begin{split}
    \left\lVert \jBra{x}^a \int_{-\infty}^t e^{(t-s)(c\partial_x + H \partial_x^2)} F(s)\;ds \right\rVert_{L^\infty} \lesssim \lVert \jBra{x}^{a+3/2} F(t) \rVert_{L^\infty_{x,t}}
\end{split}\end{equation}
for $a \leq 2$.  (In fact, we can also obtain a $1/2$ derivative smoothing effect if we localize appropriately in space and frequency; see~\Cref{sec:lin-ests}.)  Here $e^{t(c \partial_x + H \partial_x^2)}$ is the linear propagator for the Benjamin-Ono equation in a reference frame moving to the right at constant speed $c$.  If $w(x,t) = u(x+ct, t)$ is the solution to~\eqref{eqn:BO} in the moving reference frame, then the polynomial nonlinearity is given by
\begin{equation*}
    F = -\partial_x(w^2)
\end{equation*}
Because of the derivative, the nonlinearity is (mildly) quasilinear, so in our analysis we will need some control of higher derivatives of $w$ in order to close.  However, even at medium frequencies, the quadratic nonlinearity causes problems: under the assumption $|w| \lesssim \jBra{x}^{-1-\epsilon}$, the projection of $F$ to medium frequencies still only decays like $|w|^2 \lesssim \jBra{x}^{-2-2\epsilon}$, so by~\eqref{eqn:local-decay-intro}, the best bound we can expect is
$$|w| \lesssim \jBra{x}^{-1/2-2\epsilon}$$
which does not improve on our bootstrap assumption $|w| \lesssim \jBra{x}^{-1-\epsilon}$ unless $\epsilon > \frac{1}{2}$.  In particular, linear estimates alone are not enough to prove~\Cref{thm:rough-main} in the full scaling subcritical range $\epsilon > 0$, even ignoring the presence of derivatives in the nonlinearity.  

The main obstacle we face is that the quadratic nonlinearity does not produce enough decay to bootstrap if the decay of the solution $w$ is sufficiently slow.  To address this problem, we take advantage of the fact that~\eqref{eqn:BO} admits a normal form transformation, which (roughly speaking) allows us to replace the quadratic nonlinearity in~\eqref{eqn:BO} with cubic and higher order terms.  In some sense, the utility of this normal form transformation should not be surprising: Benjamin-Ono is a model for water waves, and beginning with the work of Wu~\cite{wuAlmostGlobalWellposedness2009,wuGlobalWellposedness3D2011} and Germain, Shatah, and Masmoudi~\cite{germainGlobalSolutionsGravity2012}, normal forms have become a standard tool to obtain extra decay for water wave equations~\cite{ifrimTwoDimensionalWater2016,ifrimLifespanSmallData2017a,ifrimTwoDimensionalGravity2019,hunterTwoDimensionalWater2016,harrop-griffithsFiniteDepthGravity2017,alazardGlobalSolutionsAsymptotic2015,wangGlobalSolution3D2019}.  Note, however, that in contrast to previous work, which used normal forms to prove improved dispersive decay in time for small and localized solutions, we use normal forms to obtain improved \textit{spatial} decay of the nonlinearity.

The normal form transformation for Benjamin-Ono is degenerate due to bad high-low frequency interactions in the nonlinearity.  To overcome this difficulty, Ifrim and Tataru in~\cite{mihaelaifrimWellposednessDispersiveDecay2019} introduced a paradifferential normal form/gauge transformation of the form\footnote{Note that our sign conventions for the Benjamin-Ono equation~\eqref{eqn:BO} differ from those of~\cite{mihaelaifrimWellposednessDispersiveDecay2019}.  Here, to avoid confusion, we present the version of their normal form/gauge transformation adapted to our sign convention.}
$$v_k^+ = (u_k^+ + B_k^+(u,u)) e^{-i \Phi_{\ll k}}$$
where $u_k^+ = P_k^+ u$ is the projection of $u$ to positive frequencies $\xi \sim 2^k$ and $\Phi = \partial_x^{-1} u$.  The transformed variable $v_k^+$ then satisfies a nonlinear Schr\"odinger equation
\begin{equation}\label{eqn:v-k-p-NFT-intro}
    (i\partial_t - \partial_x^2) v_k^+ = C_k(u,u,u) e^{-i \Phi_{\ll k}} + Q_k(u,u,u,u)e^{-i \Phi_{\ll k}}
\end{equation}
where $C_k$ and $Q_k$ are cubic and quartic pseudoproducts.  This might appear to be an improvement, since the spatial decay for $u$ now translates into a stronger spatial decay for the nonlinear terms.  However, the equation for $v_k^+$ does not have unidirectional wave propagation: the linear part (which is a Schr\"odinger equation) supports waves traveling in both directions, and the exponential gauge transformation is not frequency localized, meaning that we cannot get estimates like~\eqref{eqn:local-decay-intro} for~\eqref{eqn:v-k-p-NFT-intro}.  To get around this obstacle, we approximate the exponential gauge transform by a high-order Taylor polynomial:
\begin{equation*}
    e^{-i \Phi_{\ll k}} \approx \sum_{n=0}^N \frac{\left(-i \Phi_{\ll k}\right)^n}{n!} =: E_n(\Phi_{\ll k})
\end{equation*}
The approximate gauge transformation will be frequency localized provided that the frequency localization of $\Phi_{\ll k}$ is chosen to depend on $N$.  Substituting $E_n(\Phi_{\ll k})$ for $e^{-i\Phi_{\ll k}}$ gives a modification of~\eqref{eqn:v-k-p-NFT-intro} that can then be controlled by arguments similar to~\eqref{eqn:local-decay-intro}.  In particular, if we fix a spatial scale $2^j$, then for frequencies above a certain threshold $k_0$, we can show that 
$$\lVert \jBra{x-ct}^{1+\epsilon} u \rVert_{L^\infty} < \infty \implies \sup_{|x-ct|
\sim 2^j} \sum_{k > k_0} |v_k^+| \lesssim 2^{-(1+3/2\epsilon)j}$$
where $k_0 = -\frac{1-\epsilon}{2}j$ depends on $j$ and $\epsilon$.  Moreover, $B_k(u,u)$ is a pseudoproduct of order $-1$, so for $|x-ct| \sim 2^j$,
$$|u_k^+| \sim |v_k^+| + O(2^{-k} 2^{-2(1+\epsilon)j}) \lesssim 2^{-(1+3/2\epsilon)j}$$
which gives us control decay for non-small frequencies.  On the other hand, for low frequencies $|\xi| < 2^{k_0}$, the derivative in~\eqref{eqn:BO} provides additional decay, which allows us to bootstrap an estimate of the form
$$\lVert \jBra{x-ct}^{1+\epsilon} u(x,t) \rVert_{L^\infty_{x,t}} < \infty \implies  \lVert \jBra{x-ct}^{\min(2, 1+3/2\epsilon)} u(x,t) \rVert_{L^\infty_{x,t}} < \infty$$
In particular, this argument can be iterated (finitely many times) to prove that $u$ decays at least like $\jBra{x-ct}^{-2}$.

\subsection{Main results}
The main result of this paper is that localized solutions which decay a least like $\jBra{x-t}^{-1-\epsilon}$ must in fact decay like $\jBra{x-t}^{-2}$:
\begin{thm}\label{thm:main}
    Suppose $u \in L^\infty_t B^\frac{1}{2}_{2,1}$ is a solution to~\eqref{eqn:BO} such that for all $t \in \bbR$,
    \begin{equation}\label{eqn:main-decay-hypo}
        |u(x,t)| \lesssim \jBra{x-t}^{-1-\epsilon}
    \end{equation}
    for some $\epsilon > 0$.  Then, 
    \begin{equation}\label{eqn:main-decay-undiff}
        |u(x,t)| \lesssim \jBra{x-t}^{-2}
    \end{equation}
    for all times $t$.
\end{thm}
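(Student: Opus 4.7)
The plan is to bootstrap the decay hypothesis~\eqref{eqn:main-decay-hypo} by combining the microlocal moving-frame dispersive estimate~\eqref{eqn:local-decay-intro} with a frequency-localized paradifferential normal form. Passing to the comoving variable $w(x,t) = u(x+t,t)$, the equation becomes
\begin{equation*}
    \partial_t w - \partial_x w - H\partial_x^2 w = -\partial_x(w^2),
\end{equation*}
whose linear propagator transports every linear wave strictly to the left in the comoving frame, so~\eqref{eqn:local-decay-intro} yields a gain of $3/2$ weights upon inverting Duhamel backwards in time. The argument reduces to proving a \emph{single-shot improvement}: from $|w(x,t)| \lesssim \jBra{x}^{-1-\epsilon}$ one deduces $|w(x,t)| \lesssim \jBra{x}^{-\min(2,\, 1 + \tfrac{3}{2}\epsilon)}$, and this is iterated finitely many times until the exponent saturates at $2$.

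To establish the single-shot improvement, I would fix a dyadic spatial scale $2^j$ and split frequencies at the threshold $2^{k_0}$ with $k_0 = -\tfrac{1-\epsilon}{2}j$. For low frequencies $|\xi| < 2^{k_0}$, the derivative in $-\partial_x(w^2)$ can be absorbed by the low-frequency projection, which pairs with~\eqref{eqn:local-decay-intro} to give the desired spatial decay at scale $2^j$ directly. For high frequencies $k > k_0$, I apply the Ifrim--Tataru paradifferential normal form $v_k^+ = (u_k^+ + B_k^+(u,u))e^{-i\Phi_{\ll k}}$ from~\cite{mihaelaifrimWellposednessDispersiveDecay2019}, but with the exponential gauge replaced by its Taylor polynomial $E_N(\Phi_{\ll k})$ of order $N = N(\epsilon)$, chosen large enough that both the gauged variable remains frequency-localized and the truncation error $e^{-i\Phi_{\ll k}} - E_N(\Phi_{\ll k})$ is higher-order in $w$. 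The transformed $v_k^+$ then satisfies a Schr\"odinger-type equation whose right-hand side is at least cubic in $w$; feeding this cubic source into~\eqref{eqn:local-decay-intro} at scale $2^j$ and summing over $k > k_0$ yields $\sum_{k > k_0}|v_k^+|(x) \lesssim 2^{-(1 + \tfrac{3}{2}\epsilon)j}$ for $x \sim 2^j$. Since $B_k^+$ is a pseudoproduct of order $-1$, this bound transfers from $v_k^+$ to $u_{> k_0}^+$, and combining the two frequency regimes produces the improved decay at scale $2^j$. Summing dyadically in $j$ gives the pointwise improvement globally.

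With the single-shot improvement established, I would iterate: each application replaces $\epsilon$ by $\tfrac{3}{2}\epsilon$, so after $O(\log(1/\epsilon))$ passes the exponent exceeds any fixed threshold below $2$. At that point~\eqref{eqn:local-decay-intro} applied directly to the quadratic nonlinearity $\partial_x(w^2)$ already closes and saturates the bound $a = 2$ permitted by the linear estimate, producing $|w(x,t)| \lesssim \jBra{x}^{-2}$, which matches the soliton tail rate.

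The hardest step, I expect, is controlling the truncated paradifferential normal form uniformly in the weighted $L^\infty$ setting adapted to the moving frame. The Ifrim--Tataru construction is engineered for $L^2$-based energy estimates and small-data global dispersive decay, so the symbols of $B_k^+$ and of the cubic/quartic multilinear terms $C_k$, $Q_k$ in~\eqref{eqn:v-k-p-NFT-intro} must be re-estimated in polynomially weighted $L^\infty$ uniformly in $k$ and at every spatial scale $2^j$. On top of this, I must delicately balance the two sources of error in the truncated gauge: the remainder $e^{-i\Phi_{\ll k}} - E_N(\Phi_{\ll k})$ is formally high-order in the (small) quantity $\Phi$ but loses frequency localization, while $\Phi = \partial_x^{-1} u$ is itself only weakly controlled at low frequencies by the bare decay hypothesis. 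Reconciling these so that the high-frequency normal form analysis and the low-frequency derivative argument close at the \emph{same} threshold $k_0 = -\tfrac{1-\epsilon}{2}j$ is the technical heart of the proof.
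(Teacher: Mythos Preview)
Your proposal is correct and follows essentially the same route as the paper: the bootstrap $\epsilon \mapsto \min(2,\tfrac32\epsilon)$, the frequency threshold $k_0 = -\tfrac{1-\epsilon}{2}j$, the low-frequency use of the extra derivative, and the high-frequency Taylor-truncated Ifrim--Tataru gauge are exactly the paper's ingredients. One small clarification: the paper does not estimate the remainder $e^{-i\Phi_{\ll k}} - E_N(\Phi_{\ll k})$ as such, but rather defines $v_k^+$ directly with $E_N$ and computes its evolution equation, whereupon the truncation manifests as a frequency-localized quadratic term $\mathcal{B}^{+,\mathrm{rem}}_{k,N}\cdot(-i\Phi_{\ll_N k})^N/N!$ whose $\Phi^N$ factor supplies the needed extra spatial decay (this is where the $B^{1/2}_{2,1}$ hypothesis is used, via a half-derivative smoothing estimate).
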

Although this theorem is specialized to the case of a reference frame moving at speed $1$, by applying the scaling transformation
$$u \mapsto \lambda u(\lambda x, \lambda^2 t)$$
we see that~\Cref{thm:main} also implies that a solution that is localized in a reference frame moving with any speed speed $c = \lambda^{-1} > 0$ to the right decays like $\jBra{x-ct}^{-2}$.  In the endpoint case $c = 0$, the results of~\cite{munozAsymptoticBehaviorSolutions2019} imply that the only solution that is localized in a stationary reference frame is $u(x,t) \equiv 0$.

The condition $u \in L^\infty_t B^\frac{1}{2}_{2,1}$ can be seen as a Besov refinement of the condition that $u$ is finite energy, since the energy space for~\eqref{eqn:BO} is $H^\frac{1}{2}$.  In particular, we observe that the Benjamin-Ono soliton is in $B^{1/2}_{2,1}$ (in fact, it is in $H^\infty$).  


If we assume more differentiability of $u$, then we can prove that derivatives of $u$ obey symbol-type decay bounds in a moving reference frame:
\begin{thm}\label{thm:main-symb-bds}
    Let $n \geq 1$ be a positive integer.  Then, if $u \in L^\infty_t B^{n+\frac{1}{2}}_{2,1}$ is a solution to~\eqref{eqn:BO}
    satisfying the bounds
    \begin{equation}\label{eqn:symb-decay-hypo}
        \sup_{m \leq n} |\partial_x^m u(x,t)| \lesssim \jBra{x-t}^{-1-\epsilon}
    \end{equation}
    uniformly in time, then for $m =1 , 2, \cdots, n$
    \begin{equation}\label{eqn:symb-decay-bd}
        |\partial_x^m u(x,t)| \lesssim \jBra{x-t}^{-m-2}
    \end{equation}
\end{thm}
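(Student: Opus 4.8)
The plan is to bootstrap \Cref{thm:main} into derivative bounds by differentiating the equation and treating each differentiated unknown as a solution to a forced Benjamin-Ono-type equation in the moving frame. Write $w(x,t) = u(x+t,t)$, so that $w$ solves $\partial_t w = (\partial_x + H\partial_x^2)w - \partial_x(w^2)$, and set $w_m = \partial_x^m w$. Differentiating the equation $m$ times and applying the Leibniz rule produces
\begin{equation*}
    \partial_t w_m = (\partial_x + H\partial_x^2) w_m - \partial_x\Bigl(\sum_{j=0}^m \binom{m}{j} w_j w_{m-j}\Bigr),
\end{equation*}
which we rewrite by peeling off the top-order interaction as
\begin{equation*}
    \partial_t w_m - (\partial_x + H\partial_x^2) w_m + 2\partial_x(w\, w_m) = -\partial_x\Bigl(\sum_{j=1}^{m-1} \binom{m}{j} w_j w_{m-j}\Bigr) =: G_m.
\end{equation*}
The strategy is an induction on $m$: from \Cref{thm:main} we know $|w| \lesssim \jBra{x}^{-2}$, and assuming inductively that $|w_j| \lesssim \jBra{x}^{-j-2}$ for all $j < m$, every term $w_j w_{m-j}$ with $1 \le j \le m-1$ satisfies $|w_j w_{m-j}| \lesssim \jBra{x}^{-(j+2)-(m-j+2)} = \jBra{x}^{-m-4}$, so $|G_m| \lesssim \jBra{x}^{-m-5}$ after accounting for the extra derivative (which, heuristically, costs one more power of $\jBra{x}^{-1}$ on well-localized profiles; rigorously one controls $G_m$ in the weighted $L^\infty_{x,t}$ norm appearing on the right side of \eqref{eqn:local-decay-intro} using the smoothing version of the linear estimate from \Cref{sec:lin-ests} together with the $B^{n+1/2}_{2,1}$ regularity hypothesis). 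Since the coefficient function $w$ in the paraproduct term $2\partial_x(w\,w_m)$ is itself $O(\jBra{x}^{-2})$, it is a genuinely lower-order, spatially-decaying perturbation that can be handled exactly as in the treatment of the quadratic nonlinearity in the proof of \Cref{thm:main} (indeed it is \emph{better}: no normal form is needed here, because one of the two factors already has the strong decay $\jBra{x}^{-2}$, so a single application of the linear local decay estimate closes the loop).

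Concretely, I would run the Duhamel representation
\begin{equation*}
    w_m(t) = \int_{-\infty}^t e^{(t-s)(\partial_x + H\partial_x^2)}\bigl(G_m(s) - 2\partial_x(w\,w_m)(s)\bigr)\,ds
\end{equation*}
(the absence of a free-evolution term is justified, as in \Cref{thm:main}, by the localization hypothesis, which forces any incoming free wave to vanish), and then apply \eqref{eqn:local-decay-intro} with weight exponent $a = m+2 \le 2$... wait — here is the first place care is needed: \eqref{eqn:local-decay-intro} is stated only for $a \le 2$, whereas for $m \ge 1$ we want $a = m+2 > 2$. So the linear estimate cannot be applied with such a large weight directly. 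Instead I would localize dyadically in space, $|x| \sim 2^j$, and use the scaling/finite-speed structure: because all linear waves in the moving frame travel strictly to the left, the contribution at spatial scale $2^j$ to $w_m(t)$ comes from the forcing at spatial scales $\gtrsim 2^j$ at earlier times, and one estimates it by the $a=2$ version of \eqref{eqn:local-decay-intro} applied to $\jBra{x}^{m}G_m$ and $\jBra{x}^m \partial_x(w\, w_m)$, both of which have been arranged to decay like $\jBra{x}^{-m-3}$ or better, exactly as in the bootstrap scheme sketched in the introduction. This dyadic-in-space bookkeeping — combined with the observation that the extra weight $\jBra{x}^m$ commutes with the propagator up to lower-order (commutator) terms that are again of acceptable size — is what upgrades $a \le 2$ to arbitrary polynomial weights on the differentiated unknowns.

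The main obstacle I expect is precisely this weight-commutation/high-weight issue: the linear local decay estimate \eqref{eqn:local-decay-intro} saturates at $a = 2$, reflecting the $\jBra{x}^{-2}$ decay of the soliton, so pushing past it for $w_m$ requires genuinely exploiting that we are differentiating — i.e., that $\widehat{w_m}(\xi) = (i\xi)^m \widehat{w}(\xi)$ vanishes to high order at $\xi = 0$, which is what kills the low-frequency obstruction that caps the undifferentiated decay at $\jBra{x}^{-2}$. Technically, I would handle this by splitting $w_m$ into a low-frequency part $P_{<k_0} w_m$ and a high-frequency part: on high frequencies the smoothing linear estimate from \Cref{sec:lin-ests} gains derivatives and the argument is routine given the $B^{n+1/2}_{2,1}$ hypothesis; on low frequencies one writes $P_{<k_0}w_m = \partial_x^m P_{<k_0} w$ and moves the $m$ derivatives onto the already-established bound $|w|\lesssim \jBra{x}^{-2}$ via Bernstein-type inequalities on weighted spaces, each low-frequency derivative contributing a gain of $2^{k_0}$ that, when $k_0$ is chosen as a suitable negative multiple of $j$ (as in the introduction's choice $k_0 = -\tfrac{1-\epsilon}{2}j$), yields exactly the claimed $2^{-(m+2)j}$ decay at scale $2^j$. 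A secondary, more bookkeeping-heavy point is verifying that the intermediate-order products $w_j w_{m-j}$ and all the terms produced by distributing $\partial_x$ land in the correct weighted $L^\infty_{x,t}$ space rather than merely in a pointwise bound — this is where the Besov regularity $u \in L^\infty_t B^{n+1/2}_{2,1}$ is used, to ensure all the derivatives appearing in the Duhamel forcing are actually controlled and the paradifferential pieces are estimable — but this is conceptually the same mechanism already deployed in the proof of \Cref{thm:main}, just iterated, so I do not anticipate new difficulties there.
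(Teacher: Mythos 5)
Your high-level plan — induct on $m$, split into low and high frequencies at $k_0 = -\tfrac{1-\epsilon}{2}j$, and exploit the high-order vanishing of $\widehat{\partial_x^m w}(\xi)$ at $\xi = 0$ — is the right skeleton, and several points (the dyadic bookkeeping, the role of the Besov hypothesis at high frequency) match the paper. But the mechanism you invoke at low frequency does not actually deliver the claimed rate, and this is the heart of the matter.

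\textbf{The low-frequency Bernstein step fails.} You propose to write $P_{\leq k_0} w_m = \partial_x^m P_{\leq k_0} w$ and estimate
\[
\lVert \chi_j^+(x) P_{\leq k_0} \partial_x^m w \rVert_{L^\infty} \lesssim 2^{m k_0} \lVert \chi_{\sim j}^+(x) w \rVert_{L^\infty} \lesssim 2^{m k_0}\, 2^{-2j},
\]
claiming this ``yields exactly the claimed $2^{-(m+2)j}$ decay.'' But with $k_0 = -\tfrac{1-\epsilon}{2}j$ one has $2^{m k_0} = 2^{-m\frac{1-\epsilon}{2} j}$, so this produces $2^{-(2 + m\frac{1-\epsilon}{2})j}$, which is strictly \emph{weaker} than $2^{-(m+2)j}$ for every $m \geq 1$ and every $\epsilon \in (0,1)$ (indeed $\tfrac{1-\epsilon}{2} < \tfrac12 < 1$). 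One cannot simply push $k_0$ down to $-j$ to make Bernstein close, because lowering $k_0$ widens the high-frequency window $k > k_0$ and reintroduces factors of $2^{-k}$ as large as $2^{j}$ in the pseudoproduct bounds (e.g.~\eqref{eqn:B-k-bd}), which then swamps the gain and destroys the bootstrap improvement. The paper circumvents this entirely: instead of Bernstein, it puts the $m{+}1$ derivatives of the forcing $\partial_x^{m+1}(w^2)$ \emph{inside} the refined dispersive estimate \eqref{eqn:low-freq-left-waves}, where each derivative, via the non-stationary phase/group-velocity argument on the support of $\chi_j^+ \cdots \chi_{\lesssim j}^+$, contributes a full power $2^{-j}$ rather than $2^{k_0}$. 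That is the source of the $2^{-(m+2)j}$ decay, and Bernstein has no access to it because Bernstein only uses frequency size, not the spatial separation and rightward drift that Theorem~\ref{thm:ref-lin-est} exploits.

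\textbf{Secondary concern: ``no normal form needed.''} You argue that because $|w| \lesssim \jBra{x}^{-2}$ is already known, the top-order term $2\partial_x(w\,w_m)$ can be closed by the linear estimate alone. Be careful: the obstruction the normal form removes in the proof of Theorem~\ref{thm:main} is not only insufficient spatial decay but also the unfavorable derivative placement (the worst paraproduct piece $P_k^+(\partial_x u_k \cdot u_{\ll k})$ costs a factor $2^k$ that must be summed over $k > k_0$), and this difficulty persists in the differentiated equation since $w_m$ lives at the edge of the regularity hypothesis $u \in L^\infty_t B^{n+1/2}_{2,1}$. The paper resolves this by differentiating the approximate gauge transformation, keeping the same $\mathcal{B}^{+,\text{rem}}_{k,N}$-plus-$(\Psi_{\ll_N k})^N$ structure: the $(\Psi)^N$ factor (not available without the gauge) supplies the extra spatial decay to absorb the $2^{k/2}$ growth and the $c_k$ Besov sequence supplies summability over $k$. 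Your proposal gestures at ``the smoothing linear estimate together with the $B^{n+1/2}_{2,1}$ hypothesis'' but does not exhibit a replacement for the $(\Psi)^N$ gain; as written, the $k$-sum in the near-future Duhamel contribution is not shown to converge. This may be fixable by a careful paraproduct decomposition, but it is not the ``routine'' step you describe, and the paper deliberately retains the differentiated normal form precisely to avoid having to re-establish these bounds from scratch.
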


\subsection{Sketch of the proof}

Let $w(x,t) = u(x+t, t)$ denote the solution in a rightward-moving reference frame:
\begin{equation}\label{eqn:BO-moving-frame-intro}
    \partial_t w -c \partial_x w - H \partial_x^2 w = -\partial_x (w^2)
\end{equation}
Duhamel's formula shows that
\begin{equation*}
    w(x,t) = -\int_{-\infty}^t e^{(t-s)(\partial_x + H\partial_x^2)} \partial_x w(x,s)^2\;ds
\end{equation*}

\begin{figure} 
\begin{tikzpicture}
  \def\myfunction{1/(1+x^2)}
  \begin{axis}[
      axis x line=center,
      axis y line=none,
      axis line style={-},
      xtick=\empty,
      ytick=\empty,
      ylabel={},
      xlabel={},
      ymin=-0.4,
      ymax=1.2,
      xmin=-4,
      xmax=10,
      samples=200,
      domain=-4:10,
      enlarge x limits=false,
      axis equal image=false
  ]
  \addplot[
      color=black,
      thick 
  ] {\myfunction};

  \draw[dashed] (axis cs:1,0) -- (axis cs:1,{1/(1+1^2)});
  \draw[dashed] (axis cs:2,0) -- (axis cs:2,{1/(1+2^2)});

  \addplot[
      pattern=north west lines,
      pattern color=gray,
      draw=none,
      domain=1:2
  ] {\myfunction} \closedcycle;
  
  \draw [decorate,decoration={brace,amplitude=5pt,mirror,raise=0.5ex}]
    (axis cs:1,-0.05) -- (axis cs:2,-0.05)
    node [midway,below=0.1cm,font=\footnotesize] {$x \sim 2^j$}; 

  \addplot[
      fill=gray,
      fill opacity=0.5,
      draw=none,
      domain=4:8
  ] {\myfunction} \closedcycle;

  \addplot[
      pattern=crosshatch dots,
      pattern color=gray,
      draw=none,
      domain=-4:0.5
  ] {\myfunction} \closedcycle;

  \draw[->,thick] (axis cs: -2, 0.2) -- (axis cs: -3, 0.2);

  \draw[->,thick] (axis cs: 4, 0.0303) -- (axis cs: 3, 0.0303);

  \draw [decorate,decoration={brace,amplitude=5pt,mirror,raise=0.5ex}]
    (axis cs:4,-0.05) -- (axis cs:8,-0.05)
    node [midway,below=0.1cm,font=\footnotesize] {$x \sim 2^\ell$}; 

  \draw [decorate,decoration={brace,amplitude=5pt,mirror,raise=0.5ex}]
    (axis cs:-4,-0.05) -- (axis cs:0.5,-0.05)
    node [midway,below=0.1cm,font=\footnotesize] {$x < 2^{j-10}$}; 

  \end{axis}
\end{tikzpicture}

\caption{\label{fig:disp} A depiction of a solution to~\eqref{eqn:BO-moving-frame-intro}.  If we are interested in the size of the solution in the region $\mathcal{R}_j^+ = \{x \sim 2^j, x > 0\}$ (cross-hatch shading), then because of the moving reference frame and unidirectional wave propagation, any wave initially localized to the left of $x = 2^{j-10}$ will not contribute (dotted region), while any waves initially localized in $x \sim 2^\ell, x > 0$ will have exited $\mathcal{R}_j^+$ for times $t \gg 2^\ell$.}
\end{figure}
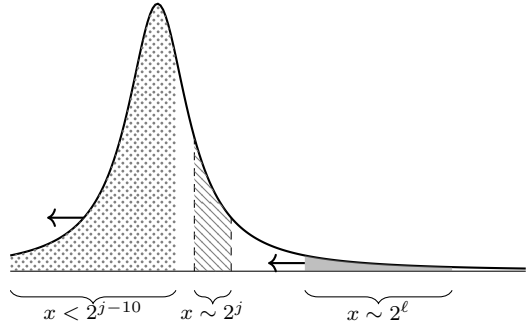

For the purpose of exposition, let us ignore all issues of regularity by projecting to frequencies $\xi = O(1)$, which gives the equation
\begin{equation*}
    W(x,t) = -\int_{-\infty}^t e^{(t-s)(\partial_x + H\partial_x^2)} F(x,s)\;ds
\end{equation*}
for $W = P_0 w$, with
\begin{equation*}
    F(x,t) = -\partial_x P_0 (w(x,t)^2)
\end{equation*}  
If we assume that $w$ decays like $\jBra{x}^{-1-\epsilon}$ for $\epsilon > 0$, then $\jBra{x}^\alpha F \in L^\infty_{x,s}$ for $\alpha = -2 - 2\epsilon$.  Since the group velocity for the propagator $e^{t(\partial_x + H \partial_x^2)}$ is greater than or equal to $1$ at all frequencies, we do not expect waves initially localized in $x < 0$ or $|x| \ll 2^j$ to contribute significantly to the Duhamel integral for $x > 0, x \sim 2^j$.  Thus, the main contribution here will come from waves initially localized at $x \sim 2^\ell, x> 0$ for $\ell > j - 10$.  Since these waves travel with speed at least $1$, they will have completely passed through the region $x \sim 2^j$ after time $t = O(2^\ell)$.  (See~\Cref{fig:disp}.)  Summing all these contributions gives
\begin{equation*}
    \lVert \chi_j^+ W(x,t) \rVert_{L^\infty} \leq \sum_{\ell > j - 10} \int_{t-O(2^{\ell})}^{t} \lVert e^{(t-s)(\partial_x + H \partial_x^2)} \chi_\ell^+(x) F(x,s)\rVert_{L^\infty}\;ds + \{\textup{better}\}
\end{equation*}
where $\chi_j^+$ is the projection to the dyadic region $x \sim 2^j, x > 0$.  Using the $L^1 \to L^\infty$ dispersive decay bounds and assuming $\alpha > 3/2$, we obtain the local decay estimate
\begin{equation}\label{eqn:local-decay-intro-2}\begin{split}
    \lVert \chi_j^+ W(x,t) \rVert_{L^\infty} \leq& \sum_{\ell > j - 10} \int_{t-O(2^{\ell})}^{t} \lVert e^{(t-s)(\partial_x + H \partial_x^2)} \chi_\ell^+(x) F(x,s)\rVert_{L^\infty}\;ds + \{\textup{better}\}\\
    \lesssim& \sum_{\ell > j - 10} \int_{t-O(2^{\ell})}^{t} (t-s)^{-1/2} 2^{(\alpha - 1)\ell}\lVert \jBra{x}^{\alpha} F(x,s)\rVert_{L^\infty}\;ds + \{\textup{better}\}\\
    \lesssim& (2^{-2j} + 2^{(\alpha-3/2)j})\lVert \jBra{x}^a F \rVert_{L^\infty_{x,s}}
\end{split}\end{equation}
where the $2^{-2j}$ factor comes from the $\{\textup{better}\}$ terms due to the nonsmooth dispersion relation (see~\Cref{thm:ref-lin-est} for a precise derivation).  We remark that the idea that unidirectional wave motion leads to improved dispersive estimates between weighted spaces goes back at least to the work of Pego and Weinstein~\cite{pegoAsymptoticStabilitySolitary1994}.  Recalling that $\alpha = -2 - 2\epsilon$, we see that the Duhamel integral improves on our original decay assumptions when
\begin{equation*}
    \alpha + 3/2 < -1 - \epsilon \Leftrightarrow \epsilon > 1/2
\end{equation*}
Assuming for a moment that we could get similar improvements at high frequencies, we could then repeat the argument to bootstrap to inverse square decay.  In particular, this can be seen as a time-dependent, dispersive analog of the elliptic results on the decay of solitary waves obtained in~\cite{hurNoSolitaryWaves2012,sunAnalyticalPropertiesCapillarygravity1997a}.  

The critical scaling for~\eqref{eqn:BO} in the scale of spaces $|x|^{-1-\epsilon}L^\infty_{x,t}$ is $\epsilon = 0$, so this (medium frequency) analysis only works when we are more than half a space weight above scaling.  The main obstacle to pushing $\epsilon$ lower is that the quadratic nonlinearity does not produce enough decay to compensate for the $3/2$ loss of weights in~\eqref{eqn:local-decay-intro-2} when $\epsilon$ is small.  To get around this problem, we will use a normal form transformation to obtain a higher order nonlinearity, which allows us to bootstrap from lower decay.  A natural candidate is the paradifferential normal form/gauge transformation for~\eqref{eqn:BO} introduced by Ifrim and Tataru in~\cite{mihaelaifrimWellposednessDispersiveDecay2019} (see also~\cite{taoGlobalWellposednessBenjamin2004,ionescuGlobalWellposednessBenjamin2007,molinetCauchyProblemBenjamin2012a}), which takes the form
\begin{equation*}
    v_k^+ = (u_k^+ + B_k^+(u,u)) e^{-i\Phi_{\ll k}}
\end{equation*}
where $u_k^+ = P_k^+ u$ is the projection of $u$ to positive frequencies of size $\sim 2^k$, and $\Phi = \partial_x^{-1} u$.  Under this change of variables, $v_k^+$ satisfies a Schr\"odinger equation
\begin{equation}\label{eqn:IT-NFT}
    i\partial_t v_k^+ - \partial_x^2 v_k^+ = (C_k(u,u,u) + Q_k(u,u,u,u)) e^{-i\Phi_{\ll k}} 
\end{equation}
for some cubic and quartic order $0$ pseudoproduct operators $C_k$ and $Q_k$.  (Note that our sign conventions in~\eqref{eqn:BO} differ from~\cite{mihaelaifrimWellposednessDispersiveDecay2019}, so their $v_k^+$ is related to ours by time-reversal.)

At first glance, this appears to solve our problems: Since $C_k$ and $Q_k$ are pseudolocal (see~\Cref{sec:pseudo-prod-ops}), the nonlinear terms in~\eqref{eqn:IT-NFT} satisfy better bounds than the quadratic term in~\eqref{eqn:BO} (and in addition have better regularity).  Based on this, we might expect $v_k^+$ to satisfy better weighted bounds than $u_k^+$.

The problem with this assessment is that the linear part of~\eqref{eqn:IT-NFT} is no longer unidirectional, so local decay bounds of the form~\eqref{eqn:local-decay-intro-2} can only hold under localization to positive frequencies.  The pseudoproducts $C_k$ and $Q_k$ will be localized to frequencies $\xi \sim 2^k, \xi > 0$; however, this localization is lost when we multiplying by $\exp(-i\Phi_{\ll k})$, which has no localization in frequency because of the exponential.

To overcome this difficulty, we replace the exponential by its high-order Taylor expansion and define
\begin{equation}\label{eqn:NFT-approx-gauge-intro}
    v_k^+ = (u_k^+ + B_k^+(u,u)) E_N(\Phi_{\ll k})
\end{equation}
with $E_N(x) = \sum_{n=0}^N \frac{(-ix)^n}{n!}$.  The transformed variable $v_k^+$ satisfies
\begin{equation}\label{eqn:AGTI-NFT}\begin{split}
    i\partial_t v_k^+ - \partial_x^2 =& \mathcal{B}_k^+(u,u) (-i \Phi_{\ll k})^N + C_k^+(u,u,u) E_{N-1}(\Phi_{\ll k}) + \{\text{better}\}
\end{split}\end{equation}
As long as $\Phi_{\ll k}$ is localized to sufficiently low frequencies (depending on $N$), all the terms in this expansion will be localized to positive frequencies $\xi \sim 2^k$, allowing us to treat the Schr\"odinger equation as having unidirectional group velocity.  The price to pay is that we cannot completely eliminate the quadratic-in-$u$ terms, since we only perform an approximate gauge transformation.  However, these terms turn out to be harmless from the perspective of decay: Since $\Phi = \int^\infty_x u(y)\;dy$ decays like $\jBra{x-t}^{-\epsilon}$ under the bootstrap hypotheses, the quadratic remainder can be made to decay arbitrarily fast in $x$.  However, they contain a derivative in an unfavorable position, so here (and only here) we must use hypothesis $u \in L^\infty_t B^{1/2}_{2,1}$ (combined with a $1/2$ derivative smoothing estimate for the linear Schr\"odinger equation) to close the estimates.

\begin{rmk}
    The idea of approximating the gauge transformation by a high-order power series works here because the gauge transformation can be seen as the limit of an infinite sequence of normal form transformations of the form~\eqref{eqn:NFT-approx-gauge-intro}.  See~\cite{correiaNonlinearSmoothingUnconditional2021} for an application of this idea to nonlinear smoothing for the Benjamin-Ono equation.  
\end{rmk}

With the gauge transformation~\eqref{eqn:NFT-approx-gauge-intro} in hand, our basic argument is as follows: First, we must show that $\chi_j^+(x)|B_k^+(w,w)|$ decays rapidly in $x$, since in that case~\eqref{eqn:NFT-approx-gauge-intro} implies that
$$\chi_j^+(x) w_k^+ = \chi_j^+(x)\tilde{w}_k^+ + \{\textup{lower order}\}$$
where $\tilde{w}_k^+ = v_k^+(x+t, t)$ is $v_k^+$ in the coordinates of the moving reference frame.  Second, we must show that the nonlinear terms in ~\eqref{eqn:AGTI-NFT} decay rapidly enough that the local decay estimate~\eqref{eqn:local-decay-intro-2} (or, more precisely, its version for frequency localized Schr\"odinger equations) gives decay for $\tilde{w}_k^+$ which is faster than for $w_k^+$.  The chief obstacle we face here is that the nonlinear terms $B_k^+$, $C_k^+$, and the like contain Hilbert transforms, which lead to a loss of decay.  Indeed, using the physical-space representation of the Hilbert transform, we see that
\begin{equation*}
    \lVert [F(x),H] g \rVert_{L^\infty} = \left\lVert \int \frac{F(x) - F(y)}{x-y} g(y)\;dy\right\rVert_{L^\infty} \leq \left\lVert \frac{F(x) - F(y)}{x-y} \right\rVert_{L^\infty_{x,y}} \lVert g \rVert_{L^1}
\end{equation*}
Applying this estimate to $w$ with $F(x) = \chi_j^\pm(x)$ (where $\chi_j^\pm(x)$ a bump function supported on $\pm x \sim 2^j$) and writing
$$\chi_j^\pm(x) H w = [\chi_j^\pm(x), H] w + H \chi_j^\pm(x) w$$
we see that the component $[\chi_j^\pm(x), H] w$ only decays like $O(2^{-j})$ for $x \sim \pm 2^j$, which is significantly slower than the $O(2^{-(1-\epsilon)j})$ estimate for $\chi_j^\pm(x) w$ in this same region.  The term $H \chi_j^\pm(x) w$ provides no useful cancellation in our setting: The Hilbert transformation is bounded as an operator from $L^\infty$ to $\jBra{D}^{0+} \jBra{x}^{0+} L^\infty$, so this term will be lower order once we take into account local smoothing.  Indeed, for the soliton solution $S(x) = 2(x^2 + 1)^{-1}$, classical complex analysis shows that $|HS(x)| \sim |x|^{-1}$ as $|x| \to \infty$, so this slow decay is a genuine feature of the problem.  Thus, when proving decay for the nonlinearity, we must pay careful attention to the exact structure of the nonlinear terms.  After doing so, we find that
$$|B_k^+(w,w)| \lesssim 2^{-(2+\epsilon)j} 2^{(-2+) k}$$
and
$$|C_k^+(w,w,w)| \lesssim 2^{-(3+\epsilon)j}\left(2^{-k} + 2^{(0+)k}\right)$$
which shows allows us to bootstrap better decay for $\chi_j^+(x) w_k^+$ under the assumption $k > -\frac{1-\epsilon}{2}j$.  In the low-frequency regime $k \leq -\frac{1-\epsilon}{2}j$, the derivative in the untransformed Benjamin-Ono equation~\eqref{eqn:BO-moving-frame-intro} gives extra decay, allowing us to close using~\eqref{eqn:local-decay-intro-2}.

\subsection{Plan of the paper}

The plan for the rest of the paper is as follows:
\begin{itemize}
    \item In~\Cref{sec:prelim}, we begin by giving key definitions in~\Cref{sec:prelim-def}.  We then define and prove useful results on pseudoproduct operators in~\Cref{sec:pseudo-prod-ops}.  Finally, we prove several commutator estimates involving the Hilbert transform in~\Cref{sec:H-bdds}.
    \item In~\Cref{sec:lin-ests}, we prove space and frequency localized dispersive estimates of the form~\eqref{eqn:local-decay-intro-2}. 
    \item In~\Cref{sec:NFGT}, we define the paralinear normal form transformation and prove bounds for pseudoproduct terms arising in the transformation and the transformed equation.
    \item In~\Cref{sec:proof-sec}, we first give the proof for~\Cref{thm:main} in~\Cref{sec:undiff-decay}, and then explain in~\Cref{sec:symb-bds} how to modify the proof for the symbols bounds in~\Cref{thm:main-symb-bds}.
\end{itemize}

\section{Preliminaries}\label{sec:prelim}

\subsection{Definitions}
\label{sec:prelim-def}

The Fourier transform of a function $f$ is given by
\begin{equation*}
    \mathcal{F} f(\xi) = \hat{f}(\xi) := \frac{1}{\sqrt{2\pi}} \int e^{-i\xi x} f(x)\;dx
\end{equation*}
with inverse
\begin{equation*}
    \mathcal{F}^{-1} g(x) = \check{g}(x) := \frac{1}{\sqrt{2\pi}} \int e^{i\xi x} g(\xi)\;d\xi
\end{equation*}
Given a function $m : \bbR \to \bbC$, we define the Fourier multiplication operator $m(D)$ by
\begin{equation*}
    m(D) f(x) = \mathcal{F}^{-1} m(\xi) \hat{f}(\xi)
\end{equation*}

Let $\chi^+_{\leq 0}: \bbR \to [0,1]$ be a smooth function with $\chi^+_{\leq 0}(x) = 1$ for $x \leq 1$ and $\chi^+_{\leq 0}(x) = 0$ for $x \geq 2$.  We define
\begin{equation*}\begin{split}
    \chi^+_{\leq j}(x) =& \chi^+_{\leq 0}(2^{-j} x)\\
    \chi^+_{\geq j}(x) =& 1 - \chi^+_{\leq j-1}(x)\\
    \chi^+_j(x) =& \chi^+_{\leq j}(x) - \chi^+_{\leq j-1}(x)\\
\end{split}\end{equation*}
we also define $\chi^-_j(x) = \chi^+_j(-x)$ and $\chi_j(x) = \chi^+_j(|x|)$, and similarly for $\chi_{\leq j}, \chi_{\geq j}$.  We also define the Littlewood-Paley projections as
\begin{equation*}\begin{split}
    P_k =& \chi_k(D)\\
    P_{\leq k} =& \chi_{\leq k}(D)\\
    P_{\geq k} =& \chi_{\geq k}(D)\\
    P^\pm_k =& \bbOne_{\pm D > 0} P_k\\
    P^\pm_{\leq k} =& \bbOne_{\pm D > 0} P_{\leq k}\\
    P^\pm_{\geq k} =& \bbOne_{\pm D > 0} P_{\geq k}
\end{split}\end{equation*}
Note that $P^\pm_{\leq k} \neq \chi^\pm_{\leq j}(D)$ (the former vanishes at positive frequencies, while the latter is equal to $1$ there).  We also adopt the notation
\begin{equation*}
    P^\pm = \bbOne_{\pm D > 0}
\end{equation*}
and note that $(H \pm i) = \pm 2iP^\mp$.

For ease of notation, we will often write $u_k$ to denote $P_k u$, $u_{\leq k} = P_{\leq k} u$, and so on.  To simplify later calculation, we will also adopt the shorthand notation
\begin{equation*}\begin{split}
    \chi^+_{[N,M]}(x) =& \sum_{j=N}^M \chi^+_j(x)\\
    \chi^+_{[N\pm A]}(x) =& \chi^+_{[N-A,N+A]}(x)\\
    \chi^+_{\sim j}(x) =& \chi^+_{[j-10,j+10]}(x)\\
    \chi^+_{\lesssim j}(x) =& \chi^+_{\leq j+10}(x)\\
    \chi^+_{\gtrsim j}(x) =& \chi^+_{\geq j+10}(x)
\end{split}\end{equation*}
and similarly for $\chi^+_{\sim j}(x)$, $P_{\sim k}$, and so on.  For later use, we also define
\begin{equation*}\begin{split}
    \chi_{\ll_N k}(x) =& \chi_{< k - 100N}(x)\\
    \chi_{\gtrsim_N k}(x) =& 1-\chi_{\ll_N k}(x)
\end{split}\end{equation*}

Due to the uncertainty principle, spatial projections (like $\chi^+_j(x)$) and frequency projections (like $P_{\leq k}$) do not commute.  However, we do have the following \textit{pseudolocality} principle: for any $N \geq 1$,
\begin{equation}\label{eqn:LP-pseudolocality}
    \lVert \chi_j^+(x) P_{\leq k} (1 - \chi^+_{\sim j}(x)) \rVert_{L^\infty \to L^\infty} \lesssim_N \jBra{2^{j+k}}^{-N}
\end{equation}
This can be seen as a special case of the more general pseudolocality obeyed by general pseudodifferential operators: see~\cite{zworskiSemiclassicalAnalysis2012}.

As a bookkeeping device when dealing with the Besov regularity in~\Cref{thm:main}, we will often use $c_k$ to denote a term in an $\ell^1_k$ sequence: for instance, we have that
\begin{equation*}
    \lVert \partial_x P_k u \rVert_{L^2} = 2^{k/2} c_k \lVert u \rVert_{B^{1/2}_{2,1}}
\end{equation*}



\subsection{Pseudoproduct operators}\label{sec:pseudo-prod-ops}

Given a symbol $b(\xi, \eta)$, we define the bilinear pseudoproduct operator $B(f,g)$ to be
\begin{equation*}
    B(f,g) = \mathcal{F}^{-1} \int b(\xi, \eta) \hat{f}(\xi-\eta) \hat{g}(\eta)\;d\eta
\end{equation*}
We note that in the case $b = 1$, we have that $B(f,g) = \sqrt{2\pi}fg$ is simply the usual product of functions.  Similarly, we have that
$$P_{k}(f P_{\ll k} g) = B(f,g)$$
is a pseudoproduct operator with $b(\xi,\eta) = \chi_k(\xi) \chi_{\ll k}(\eta)$.  We observe that pseudoproduct operators obey a Leibnitz-type rule:
\begin{equation*}\begin{split}
    \partial_x B(f,g)   =& \mathcal{F}^{-1} \int i\xi b(\xi, \eta) \hat{f}(\xi-\eta) \hat{g}(\eta)\;d\eta\\
                        =& B(\partial_x f, g) + B(f, \partial_x g)
\end{split}\end{equation*}
For reasonably smooth symbols $b$, the pseudoproduct $B(f,g)$ obeys H\"older-type estimates:
\begin{lemma}\label{lem:pseudo-prod-Holder}
    If $\mathcal{F}^{-1} b \in L^1$, then
    $$\lVert B(f,g) \rVert_{L^p} \lesssim \lVert f \rVert_{L^q} \lVert g \rVert_{L^r}$$
    whenever $\frac{1}{p} = \frac{1}{q} + \frac{1}{r}$.
\end{lemma}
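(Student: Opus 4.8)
The plan is to reduce the bound to Minkowski's integral inequality together with the ordinary H\"older inequality, after rewriting $B(f,g)$ as an $L^1$-superposition of translates of products $fg$. Set $K = \mathcal{F}^{-1} b$, the inverse Fourier transform taken in the two variables dual to $(\xi,\eta)$; the hypothesis is precisely that $K \in L^1(\bbR^2)$.

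First I would insert the representation $b(\xi,\eta) = c \iint K(y,z)\, e^{-i\xi y - i\eta z}\,dy\,dz$ into the definition of $B(f,g)$, interchange the order of integration, and perform the $\xi$- and $\eta$-integrals. After the change of variables $\sigma = \xi - \eta$, the two oscillatory integrals collapse to $\mathcal{F}^{-1}\hat{f}$ and $\mathcal{F}^{-1}\hat{g}$ evaluated at translates of $x$, and one is left with an identity of the schematic form
\[
    B(f,g)(x) = c' \iint K(y,z)\, f(x-y)\, g(x-y-z)\;dy\,dz,
\]
where $c'$ is an absolute constant depending only on the Fourier normalization. (The precise affine shifts of $x$ depend on conventions, but in every case each of $f$ and $g$ is evaluated at a translate of $x$ by a linear function of $(y,z)$, which is all that is used below.)

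Next I would apply Minkowski's integral inequality in $L^p_x$ to move the norm inside the $(y,z)$-integral, and then estimate the integrand pointwise in $(y,z)$ by H\"older's inequality with exponents $q$ and $r$. Since $\frac{1}{p} = \frac{1}{q} + \frac{1}{r}$ and the $L^q$- and $L^r$-norms are translation invariant,
\[
    \lVert f(\cdot - y)\, g(\cdot - y - z) \rVert_{L^p_x} \leq \lVert f \rVert_{L^q}\, \lVert g \rVert_{L^r},
\]
so pulling this constant out of the $(y,z)$-integral leaves exactly $\lVert K \rVert_{L^1} = \lVert \mathcal{F}^{-1} b \rVert_{L^1}$, which gives the claim. (Minkowski's inequality requires $p \geq 1$, which is the only range relevant for the applications in this paper.)

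The only genuinely technical point is justifying the formal manipulations in the first step --- the use of Fubini's theorem to interchange integrations and to collapse the oscillatory integrals. I would establish the representation formula first for $f, g$ Schwartz and for symbols $b$ with both $b$ and $K$ Schwartz, where absolute convergence makes every step legitimate, and then extend to general $f \in L^q$, $g \in L^r$ and general $b$ with $\mathcal{F}^{-1} b \in L^1$ by a standard density argument using the a priori bound just derived. I expect this approximation step to be the main (though routine) obstacle; the analytic content of the lemma is entirely in the one-line Minkowski--H\"older estimate.
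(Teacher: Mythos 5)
Your proposal is correct and follows essentially the same route as the paper: rewrite $B(f,g)$ as the superposition $\iint \mathcal{F}^{-1}b(y,z)\, f(x-y)\, g(x-y-z)\,dy\,dz$ and then conclude by Minkowski's integral inequality, H\"older's inequality, and translation invariance of $L^p$ norms. The paper treats the Fubini/density justification as routine and omits it; otherwise the argument is identical.
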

\begin{proof}
    Taking the inverse Fourier transform, we see that
    \begin{equation*}
        B(f,g) = 2\pi \iint \mathcal{F}^{-1} b(y,z) f(x-y)g(x-y-z) \;dydz
    \end{equation*}
    so the result follows immediately from Minkowski's inequality and the translation invariance of the Lebesgue space norms.
\end{proof}

One important class of symbols $b$ satisfying the hypotheses of~\Cref{lem:pseudo-prod-Holder} are symbols $b$ supported on the region $|\xi|,|\eta| \lesssim 2^k$ which satisfy the symbol bounds
\begin{equation}\label{eqn:b-symb-bdds}
    |\partial_\xi^a \partial_\eta^b b(\xi,\eta)| \lesssim_{a,b} 2^{-(a+b)k}
\end{equation}
since for these symbols,
\begin{equation}\label{eqn:F-inv-b-bd}
    |\mathcal{F}^{-1} b(x,y)| \lesssim \frac{2^k}{(1+|2^kx|)^N} \frac{2^k}{(1+|2^k y|)^N}
\end{equation}
In particular, these symbols have the following pseudolocality property:
\begin{lemma}\label{lem:pseudoloc-lem}
    Suppose $b_k$ is supported in the region $|\xi|,|\eta| \lesssim 2^k$ and satisfies the symbol bounds~\eqref{eqn:b-symb-bdds}.  If the distance between the region $\mathcal{R} \subset \bbR$ and $\supp f$ is $\sim 2^j$, then
    \begin{equation}\label{eqn:bilin-pseudoprod-pseudoloc}
        \lVert B(f,g)(x) \rVert_{L^p(\mathcal{R})} \lesssim_N \jBra{2^{j+k}}^{-N} \lVert f \rVert_{L^q} \lVert g \rVert_{L^r}
    \end{equation}
    where $N \geq 1$ is fixed and $\frac{1}{p} = \frac{1}{q} + \frac{1}{r}$.  The estimate~\eqref{eqn:bilin-pseudoprod-pseudoloc} also holds if $\mathcal{R}$ is separated from $\supp g$ by a distance $\sim 2^j$.
\end{lemma}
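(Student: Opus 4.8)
The plan is to run the kernel argument from the proof of~\Cref{lem:pseudo-prod-Holder}, but now keeping track of where the various factors in the integrand are supported. Recall that, writing $\mathcal{F}^{-1}b_k(y,z)$ for the two-variable inverse Fourier transform of $b_k$,
\begin{equation*}
    B(f,g)(x) = 2\pi \iint \mathcal{F}^{-1}b_k(y,z)\, f(x-y)\, g(x-y-z)\;dy\,dz.
\end{equation*}
Since $b_k$ satisfies the symbol bounds~\eqref{eqn:b-symb-bdds} to all orders, the decay estimate~\eqref{eqn:F-inv-b-bd} holds with any exponent in place of $N$; I would use it with exponent $N+2$ and record the result in the form
\begin{equation*}
    |\mathcal{F}^{-1}b_k(y,z)| \lesssim_N \frac{2^{2k}}{\big[(1+|2^ky|)(1+|2^kz|)\big]^{N+2}}.
\end{equation*}

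The key geometric observation is that, for $x\in\mathcal{R}$, on the support of the integrand the product $(1+|2^ky|)(1+|2^kz|)$ is bounded below by a multiple of $\jBra{2^{j+k}}$. Indeed, if $\mathcal{R}$ is separated from $\supp f$ by a distance $\sim 2^j$, then $f(x-y)$ vanishes unless $x-y\in\supp f$, in which case $|y| \geq \operatorname{dist}(x,\supp f) \geq \operatorname{dist}(\mathcal{R},\supp f) \gtrsim 2^j$, so that $(1+|2^ky|)(1+|2^kz|) \geq 1+|2^ky| \gtrsim 1 + 2^{j+k} \gtrsim \jBra{2^{j+k}}$. If instead $\mathcal{R}$ is separated from $\supp g$, then $g(x-y-z)$ vanishes unless $|y+z| \geq \operatorname{dist}(\mathcal{R},\supp g) \gtrsim 2^j$, and since $(1+|2^ky|)(1+|2^kz|) \geq 1+|2^ky|+|2^kz| \geq 1+|2^k(y+z)| \gtrsim \jBra{2^{j+k}}$, the same lower bound holds. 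In either case we may pull a factor $\jBra{2^{j+k}}^{-N}$ out of the kernel estimate and be left with an order-$2$ kernel, so that for $x\in\mathcal{R}$,
\begin{equation*}
    |B(f,g)(x)| \lesssim_N \jBra{2^{j+k}}^{-N} \iint K_k(y)\, K_k(z)\, |f(x-y)|\,|g(x-y-z)|\;dy\,dz,
\end{equation*}
where $K_k(y) = 2^k(1+|2^ky|)^{-2}$ has $\lVert K_k\rVert_{L^1} \lesssim 1$ uniformly in $k$.

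To conclude I would take the $L^p(\mathcal{R})$ norm, enlarge the integration region to all of $\bbR$, and repeat the argument from~\Cref{lem:pseudo-prod-Holder} verbatim: by Minkowski's integral inequality and translation invariance of the Lebesgue norm,
\begin{equation*}
    \Big\lVert \iint K_k(y) K_k(z)\, f(\cdot-y)\, g(\cdot-y-z)\;dy\,dz \Big\rVert_{L^p} \leq \iint K_k(y)\,K_k(z)\, \lVert f(\cdot-y)\, g(\cdot-y-z)\rVert_{L^p}\;dy\,dz,
\end{equation*}
and then applying Hölder's inequality with $\tfrac1p = \tfrac1q + \tfrac1r$ inside the integral together with $\lVert K_k\rVert_{L^1}\lesssim 1$ yields
\begin{equation*}
    \lVert B(f,g)\rVert_{L^p(\mathcal{R})} \lesssim_N \jBra{2^{j+k}}^{-N}\lVert f\rVert_{L^q}\lVert g\rVert_{L^r}.
\end{equation*}
There is no genuine difficulty in this argument; the only point that requires a moment's care is to start from a kernel bound that decays two orders faster than the target rate, so that after extracting the factor $\jBra{2^{j+k}}^{-N}$ the residual kernel is still integrable with $k$-uniform mass and the proof of~\Cref{lem:pseudo-prod-Holder} carries over unchanged.
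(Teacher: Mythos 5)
Your proof is correct and follows essentially the same route as the paper: both start from the physical-space kernel representation, use the support restriction ($|y|\gtrsim 2^j$ or $|y+z|\gtrsim 2^j$) together with the rapid decay~\eqref{eqn:F-inv-b-bd} of $\mathcal{F}^{-1}b_k$ to gain the factor $\jBra{2^{j+k}}^{-N}$, and then apply Minkowski and H\"older as in~\Cref{lem:pseudo-prod-Holder}. The only cosmetic difference is bookkeeping: you extract the decaying factor pointwise after starting from a higher-order kernel bound, whereas the paper integrates the full kernel over the restricted region and lands on $N-1$; these are interchangeable.
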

\begin{proof}
    We begin with the physical space representation of $B(f,g)$:
    \begin{equation*}
        B(f,g)(x) = 2\pi \iint \mathcal{F}^{-1}(y,z) f(x-y) g(x-y-z)\;dydz
    \end{equation*}
    If $x \in \mathcal{R}$, then $f(x-y) = 0$ for $|y| \ll 2^j$, so using the decay estimate~\eqref{eqn:F-inv-b-bd} gives
    \begin{equation*}\begin{split}
        \lVert B(f,g)(x) \rVert_{L^p(\mathcal{R})}  \leq& 2\pi \iint_{|y| \gtrsim 2^j} |\mathcal{F}^{-1} b(y,z)| \lVert f(x-y) g(x-y-z) \rVert_{L^p}\;dydz\\
                                                    \lesssim_N& \iint_{|y| \gtrsim 2^j} \frac{2^k}{\jBra{2^k y}^N} \frac{2^k}{\jBra{2^k z}^N}\;dydz \lVert f \rVert_{L^q} \lVert g \rVert_{L^r}\\
                                                    \lesssim_N& \jBra{2^{j+k}}^{1-N} \lVert f \rVert_{L^q} \lVert g \rVert_{L^r}
    \end{split}\end{equation*}
    which is~\eqref{eqn:bilin-pseudoprod-pseudoloc} (with $N$ replaced by $N-1$).  Modifying the argument by restricting the double integral to the region $|y+z| \gtrsim 2^j$ gives the result when $d(\mathcal{R}, \supp g) \sim 2^j$.
\end{proof}

One particular application of~\Cref{lem:pseudoloc-lem} that we will make frequent use of is the following: For $k > j$, if $b_k$ is symbol satisfying the hypotheses of~\Cref{lem:pseudoloc-lem}, then
\begin{equation*}\begin{split}
    \chi_j^+B_k(f,g)(x) =& \chi_j^+ B_k(\chi_{\sim j}^+f, \chi_{\sim j}^+g) + \chi_j^+ B_k(\chi_{\sim j}^+f, (1-\chi_{\sim j}^+)g)\\
    &+ \chi_j^+ B_k((1-\chi_{\sim j}^+)f, g)(x)\\
    =& \chi_j^+ B_k(\chi_{\sim j}^+f, \chi_{\sim j}^+g) (x) + O_N(\jBra{2^{k+j}}^N \lVert f \rVert_{L^\infty} \lVert g \rVert_{L^\infty})
\end{split}\end{equation*}
Heuristically speaking, this means we can distribute spatial localization onto the arguments of a pseudoproduct, up to a dyadic `blurring,' and the error will be lower order provided that $2^{j+k} \gg 1$.

We will also work with cubic and quartic pseudoproduct operators of the form
\begin{equation*}
    C(f,g,h) = \mathcal{F}^{-1} \iiint c(\xi,\eta,\sigma) \hat{f}(\xi - \eta) \hat{g}(\eta - \sigma) \hat{h}(\sigma)\; d\eta d\sigma
\end{equation*}
and
\begin{equation*}
    Q(f,g,h,k) = \mathcal{F}^{-1} \iiint q(\xi,\eta,\sigma,\rho) \hat{f}(\xi - \eta) \hat{g}(\eta - \sigma) \hat{h}(\sigma- \rho) \hat{k}(\rho) \; d\eta d\sigma d\rho
\end{equation*}
We remark that the conclusions of~\Cref{lem:pseudo-prod-Holder,lem:pseudoloc-lem} also hold for these higher degree pseudoproducts (for cubic pseudoproducts, cf.~\cite[Section 2.2]{stewartLongTimeDecay2025}).

\subsection{Bounds involving the Hilbert transform}\label{sec:H-bdds}
We have the following commutator bound:
\begin{lemma}\label{lem:H-comm-bd}
    For any $f \in L^1$, 
    \begin{equation}\label{eqn:loc-H-comm-bd}
        \lVert [\chi^+_j(x), H] f \rVert_{L^\infty} \lesssim 2^{-j} \lVert f \rVert_{L^1}
    \end{equation}

    Moreover, for $n \geq 1$,
    \begin{equation}\label{eqn:loc-H-comm-bd-deriv}
        \lVert \partial_x^n [\chi^+_j(x), H] f \rVert_{L^\infty} \lesssim_n 2^{-(n+1)j} \lVert f \rVert_{L^1}
    \end{equation}
\end{lemma}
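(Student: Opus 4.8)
The plan is to work entirely in physical space via the singular integral representation of the Hilbert transform. Writing $Hf(x) = \frac{1}{\pi}\,\mathrm{p.v.}\int \frac{f(y)}{x-y}\,dy$ (consistent with the convention used in the sketch above), the commutator becomes, at least formally,
\begin{equation*}
    [\chi^+_j(x), H]f(x) = \frac{1}{\pi}\,\mathrm{p.v.}\int \frac{\chi^+_j(x)-\chi^+_j(y)}{x-y}\,f(y)\,dy =: \frac{1}{\pi}\int K_j(x,y)\, f(y)\,dy .
\end{equation*}
The key observation is that $K_j$ has no singularity on the diagonal: by the fundamental theorem of calculus,
\begin{equation*}
    K_j(x,y) = \int_0^1 (\chi^+_j)'\bigl(y + t(x-y)\bigr)\,dt ,
\end{equation*}
which is a genuinely smooth function of $(x,y) \in \bbR^2$. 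Hence the principal value above is an ordinary integral, and (since we will see $K_j$ and its $x$-derivatives are bounded) the commutator acting on $f \in L^1$ is a well-defined bounded function to which one may differentiate under the integral sign.

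From the scaling $\chi^+_j(x) = \chi^+_{\leq 0}(2^{-j}x) - \chi^+_{\leq 0}(2^{-(j-1)}x)$, one reads off the elementary bounds $\lVert (\chi^+_j)^{(m)} \rVert_{L^\infty} \lesssim_m 2^{-mj}$ for every $m \geq 0$. Plugging $m = 1$ into the integral representation gives $|K_j(x,y)| \leq \lVert (\chi^+_j)' \rVert_{L^\infty} \lesssim 2^{-j}$ uniformly in $x,y$, so
\begin{equation*}
    \lVert [\chi^+_j(x), H]f \rVert_{L^\infty} \leq \frac{1}{\pi}\sup_x \int |K_j(x,y)|\,|f(y)|\,dy \lesssim 2^{-j}\lVert f \rVert_{L^1},
\end{equation*}
which is~\eqref{eqn:loc-H-comm-bd}.

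For~\eqref{eqn:loc-H-comm-bd-deriv}, I would differentiate the integral representation $n$ times in $x$:
\begin{equation*}
    \partial_x^n K_j(x,y) = \int_0^1 t^n\, (\chi^+_j)^{(n+1)}\bigl(y + t(x-y)\bigr)\,dt ,
\end{equation*}
whence $|\partial_x^n K_j(x,y)| \leq \tfrac{1}{n+1}\lVert (\chi^+_j)^{(n+1)} \rVert_{L^\infty} \lesssim_n 2^{-(n+1)j}$ uniformly in $x,y$. Differentiating $[\chi_j^+(x),H]f$ under the integral sign (justified by this uniform bound together with $f \in L^1$) then gives
\begin{equation*}
    \lVert \partial_x^n [\chi^+_j(x), H]f \rVert_{L^\infty} \leq \frac{1}{\pi}\sup_x \int |\partial_x^n K_j(x,y)|\,|f(y)|\,dy \lesssim_n 2^{-(n+1)j}\lVert f \rVert_{L^1}.
\end{equation*}

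There is no substantial obstacle in this argument; the only point needing (minor) care is that $K_j$ is a smooth kernel rather than merely principal-value defined, which is precisely what the Hadamard-type representation $K_j(x,y) = \int_0^1 (\chi^+_j)'(y+t(x-y))\,dt$ supplies — and this same representation is what produces the clean, scaling-sharp bounds on $K_j$ and all of its $x$-derivatives.
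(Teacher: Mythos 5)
Your proof is correct and takes essentially the same approach as the paper: both use the physical-space kernel
\begin{equation*}
K_j(x,y)=\frac{\chi_j^+(x)-\chi_j^+(y)}{x-y},
\end{equation*}
recognize it is smooth and bounded, and control its $x$-derivatives by $\lVert(\chi_j^+)^{(n+1)}\rVert_{L^\infty}\lesssim 2^{-(n+1)j}$ via an integral representation. The one small difference worth noting is that your representation $\partial_x^n K_j(x,y)=\int_0^1 t^n(\chi_j^+)^{(n+1)}(y+t(x-y))\,dt$, obtained by differentiating the FTC identity $K_j=\int_0^1(\chi_j^+)'(y+t(x-y))\,dt$, is an exact formula, whereas the paper derives its bound from the Taylor expansion with integral remainder and, as displayed, identifies $\partial_x^n\frac{F(x)-F(y)}{x-y}$ with the remainder integral $\int_0^1 F^{(n+1)}(y+s(x-y))(1-s)^n\,ds$; that identity is not literally correct (the Leibniz rule on $(x-y)^n\cdot R_n(x,y)$ produces additional terms), though the final bound $\lesssim\lVert F^{(n+1)}\rVert_{L^\infty}$ is the same. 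Your FTC-based derivation sidesteps this bookkeeping entirely and is the cleaner way to justify the estimate.
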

\begin{proof}
    By the Mean Value Theorem,
    \begin{equation*}
        |[\chi^+_j(x), H] f(x)| = \left|\int \frac{\chi_j^+(x) - \chi_j^+(y)}{x-y} f(y)\;dy\right| \lesssim 2^{-j} \lVert f \rVert_{L^1}
    \end{equation*}
    which is~\eqref{eqn:loc-H-comm-bd}.  The argument for~\eqref{eqn:loc-H-comm-bd-deriv} is similar once we notice that for a smooth function $F$, Taylor's theorem gives us the expansion
    \begin{equation*}
        F(x) = \sum_{k=0}^n \frac{(x-y)^k}{k!} F^{(k)}(y) + \frac{(x-y)^{n+1}}{n!} \int_0^1 F^{(n+1)}(y + s(x-y)) (1-s)^n\;ds
    \end{equation*}
    so
    \begin{equation*}
        \left| \partial_x^n \frac{F(x) - F(y)}{x-y}\right| = \left| \int_0^1 F^{(n+1)}(y + s(x-y)) (1-s)^n\;ds\right| \leq \lVert F^{(n+1)} \rVert_{L^\infty}
    \end{equation*}
    Taking $F = \chi_j^+$ now gives~\eqref{eqn:loc-H-comm-bd-deriv}.
\end{proof}

\section{Linear estimates}\label{sec:lin-ests}

The linear Benjamin-Ono flow  satisfies the same $L^1 \to L^\infty$ bounds (in any reference frame) as the one dimensional Schr\"odinger operator:
\begin{equation}\label{eqn:BO-disp-basic}
    \lVert e^{t(\partial_x + H\partial_x^2)} \rVert_{L^1 \to L^\infty}= \lVert e^{t H\partial_x^2} \rVert_{L^1 \to L^\infty} \sim t^{-1/2}
\end{equation}

The following refined estimate, which captures the more rapid decay away from the microlocal propagation set, will be key to our analysis.
\begin{thm}\label{thm:ref-lin-est}
    Let $j$ be a nonnegative integer, and $\epsilon \in (0,1)$.  Define
    \begin{equation}\label{eqn:k-0-def}
        k_0 = -\frac{1-\epsilon}{2} j 
    \end{equation}
    Then, for any positive integer $a$ and any time $t > 0$,
    \begin{equation}\label{eqn:low-freq-left-waves}
        \left\lVert \chi_j^+(x) e^{t(\partial_x + H \partial_x^2)} P_{\leq k_0} \partial_x^a \chi_{\lesssim j}^+(x) \right\rVert_{L^1 \to L^\infty} \lesssim \min\left(2^{-(a+2)j}, t^{-(a+2)}\right)
    \end{equation}
    Similarly, for $k > k_0$, we get the bound
    \begin{equation}\label{eqn:dyadic-freq-left-waves}
        \left\lVert \chi_j^{+}(x) e^{t(\partial_x + H \partial_x^2)} P_k \partial_x^a \chi_{\lesssim j}^+(x) \right\rVert_{L^1 \to L^\infty} \lesssim_M \min\left(2^{-M/2j}, t^{-M/2} \jBra{2^k}^{-M/2}\right)
    \end{equation}
    for any positive integer $M$.    Moreover, if $\ell > j - 10$, then for $t > 2^{\ell + 10}$, the following bounds hold:
    \begin{equation}\label{eqn:low-freq-right-waves}
        \left\lVert \chi_j^{+}(x) e^{t(\partial_x + H \partial_x^2)} P_{\leq k_0} \partial_x^a \chi_\ell^{+}(x) \right\rVert_{L^1 \to L^\infty} \lesssim t^{-(a+2)}
    \end{equation}
    and for all $t > \frac{2^{\ell + 10}}{\jBra{2^k}},$
    \begin{equation}\label{eqn:dyadic-freq-right-waves}
        \left\lVert \chi_j^{+}(x) e^{t(\partial_x + H \partial_x^2)} P_k \partial_x^a \chi_\ell^{+}(x) \right\rVert_{L^1 \to L^\infty} \lesssim_M t^{-M/2} \jBra{2^k}^{-M/2}
    \end{equation}
    for any positive integer $M$.
\end{thm}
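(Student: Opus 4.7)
My strategy is to analyze each operator's Schwartz kernel as an oscillatory integral in $\xi$, exploiting non-stationary phase (via integration by parts) and stationary phase where relevant, together with the unidirectional group velocity $-(1+2|\xi|)\le -1$ of the linear flow in the moving reference frame. Writing the kernel as
\begin{equation*}
    K_t(x,y) = \frac{\chi_j^+(x)\chi_*^+(y)}{2\pi}\int e^{i\Phi(\xi)}(i\xi)^a m(\xi)\,d\xi,\qquad \Phi(\xi) = (x-y+t)\xi + t|\xi|\xi,
\end{equation*}
where $\chi_*^+$ is either $\chi^+_{\lesssim j}$ or $\chi^+_\ell$, and $m$ is either $\chi_{\leq k_0}$ or $\chi_k$, the phase satisfies $\partial_\xi\Phi = (x-y+t) + 2t|\xi|$ with $\partial_\xi^2\Phi = \pm 2t$ on $\pm\xi>0$; its stationary point $|\xi_s|=(y-x-t)/(2t)$ exists only when $y>x+t$, which is the microlocal manifestation of the left-moving group velocity. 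Since $a\ge 1$, the amplitude $(i\xi)^a m(\xi)$ vanishes at $\xi=0$, so I would split $m=m^++m^-$ by the sign of $\xi$ and work on each half-line separately, where the phase is smooth and integration by parts produces no boundary contributions.

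For the right-wave estimates~\eqref{eqn:low-freq-right-waves} and~\eqref{eqn:dyadic-freq-right-waves}, the hypotheses $t\geq 2^{\ell+10}$ (resp.\ $t\geq 2^{\ell+10}/\jBra{2^k}$) together with $y\sim 2^\ell$, $x\sim 2^j$, $\ell>j-10$ force $|x-y+t|\gtrsim t$, placing the stationary point outside the frequency support; one then has $|\partial_\xi\Phi|\gtrsim \max(|x-y+t|, t|\xi|)$ uniformly, so $N$ integrations by parts gain a factor $(|\partial_\xi\Phi|)^{-N}$. Balancing against the $L^1$ size of the amplitude (of order $2^{(a+1)k_0}$ in the low-frequency case and $2^{(a+1)k}$ in the dyadic shell) and choosing $N$ sufficiently large gives the claimed $t^{-(a+2)}$ and $t^{-M/2}\jBra{2^k}^{-M/2}$ decays.

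For the left-wave estimates~\eqref{eqn:low-freq-left-waves} and~\eqref{eqn:dyadic-freq-left-waves}, $y$ is no longer restricted to lie to the right of $x$, so the argument splits into cases according to whether the stationary point $\xi_s$ lies inside the frequency support. In the non-stationary regime ($y<x+t$, or $\xi_s$ separated from the support), I would again use the lower bound $|\partial_\xi\Phi|\gtrsim \max(|x-y+t|, t|\xi|)$ and repeatedly integrate by parts in $\xi$, carefully tracking the amplitude-derivative costs: each $\xi$-derivative of $\chi_{\leq k_0}(\xi)$ costs $2^{-k_0}$, while each derivative of $(i\xi)^a$ costs $2^{-k_0}$ on the support, so the cleanest bookkeeping is after the rescaling $\eta = 2^{-k_0}\xi$ (resp.\ $\eta = 2^{-k}\xi$), which normalizes the amplitude and moves all the parameter dependence into the rescaled phase $2^{k_0}(x-y+t)\eta + 2^{2k_0}t|\eta|\eta$. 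In the stationary regime, the classical stationary phase lemma (with $|\partial_\xi^2\Phi|=2t$) yields a contribution of size $t^{-1/2}|\xi_s|^a m(\xi_s)$; the smallness of $|\xi_s|\le 2^{k_0}$ (resp.\ $\lesssim 2^k$) and the geometric constraint $|y-x-t|\lesssim t\cdot 2^{k_0}$ for the stationary point to lie in the support supply the additional spatial decay.

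I expect the main technical obstacle to be the transition between the stationary and non-stationary regimes in~\eqref{eqn:low-freq-left-waves} at intermediate times $t\sim 2^j$, where the two IBP gains $|x-y+t|^{-1}$ and $(t|\xi|)^{-1}$ are of comparable size and the stationary window is not separated from the non-stationary one. The specific scaling $k_0 = -\tfrac{1-\epsilon}{2}j$ is engineered for precisely this balance: it places the critical frequency where the two denominators cross, and the slack $\epsilon>0$ allows the amplitude-derivative costs from repeated IBP to be absorbed. For the dyadic bound~\eqref{eqn:dyadic-freq-left-waves}, the higher group velocity $1+2\cdot 2^k$ for $k>k_0$ means that the relevant wave packet leaves the support $x\sim 2^j$ after time $t\sim 2^j/\jBra{2^k}$, which is encoded in the $\jBra{2^k}^{-M/2}$ factor; the corresponding proof uses IBP at the dyadic scale $2^k$ with the same split into stationary and non-stationary contributions.
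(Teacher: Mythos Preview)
Your proposal contains a genuine gap in the low-frequency estimates~\eqref{eqn:low-freq-left-waves} and~\eqref{eqn:low-freq-right-waves}. The claim that splitting into half-lines and using the vanishing of $(i\xi)^a m(\xi)$ at $\xi=0$ eliminates all boundary contributions is incorrect: it kills only the \emph{first} boundary term. Once you integrate by parts repeatedly, each differentiation of $\xi^a$ lowers the power, and after $a$ such steps the new amplitude is nonvanishing at $\xi=0$; the subsequent boundary evaluation is then nonzero. Equivalently, working on the whole line, the discontinuity $\partial_\xi^2\Phi = 2t\,\sgn(\xi)$ injects a $\sgn(\xi)$ factor at each step, and eventually differentiating $\sgn(\xi)$ produces a $\delta_0$. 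These boundary (or $\delta_0$) contributions are not error terms but the \emph{dominant} contribution: they produce precisely the rate $\min(2^{-(a+2)j}, t^{-(a+2)})$, and the specific exponent $a+2$ in the statement is the signature of the $C^1$-but-not-$C^2$ dispersion relation $|\xi|\xi$. Your scheme, as written, would yield arbitrarily fast decay $(x-y+t)^{-N}$, which is simply false for this operator. The paper handles this by setting up coupled recursions for $I_{a,b,c}$ and $J_{a,b,c}$ (the latter carrying the $\sgn(\xi)$) and tracking the boundary terms explicitly; they come out as $\sum_m C_m\, t^m/(x-y+t)^{a+2+m}$, whose supremum over the relevant $(x,y)$ gives exactly the claimed minimum.

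A secondary point: for the left-wave kernels, the cutoff $\chi^+_{\lesssim j}(y)$ places $y$ to the \emph{left} of $x\sim 2^j$ (this is the intended geometry, cf.\ the figure and the dyadic decomposition in the application), so $x-y+t>0$ uniformly and the phase has \emph{no} stationary point on the support. Your entire stationary-phase case analysis for~\eqref{eqn:low-freq-left-waves} is therefore moot; the subtlety is not stationary versus nonstationary, but rather extracting the correct boundary contribution from repeated nonstationary-phase integration by parts across $\xi=0$. Your treatment of the dyadic-frequency bounds~\eqref{eqn:dyadic-freq-left-waves} and~\eqref{eqn:dyadic-freq-right-waves} is essentially fine, since there the amplitude is supported away from $\xi=0$ and the issue does not arise.
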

\begin{proof}
    We will give a complete proof for~\eqref{eqn:low-freq-left-waves}, and then explain how to modify the argument to prove~\cref{eqn:low-freq-right-waves,eqn:dyadic-freq-left-waves,eqn:dyadic-freq-right-waves}.  To begin, we observe that the operator in~\eqref{eqn:low-freq-left-waves} has an integral kernel given by
    \begin{equation}
        K(x,y) = i^a \frac{\chi_j^+(x) \chi_{\lesssim j}^+(y)}{2\pi} \int e^{i \phi} \chi_{\leq k_0}(\xi) \xi^a\;d\xi
    \end{equation}
    where
    \begin{equation}\label{eqn:phi-lin-def}
        \phi = \xi(x-y+t) + t|\xi|\xi
    \end{equation}
    The estimate~\eqref{eqn:low-freq-left-waves} is equivalent to showing that
    \begin{equation*}
        \lVert K(x,y) \rVert_{L^\infty_{x,y}} \lesssim 2^{-k_0}\min\left(2^{-(a+2)j}, t^{-(a+2)}\right)
    \end{equation*}
    which in turn is equivalent to bounding the integral expression. Now, on the support of $K(x,y)$, 
    \begin{equation}\label{eqn:phi-lin-nonstat}
        \partial_\xi \phi = (x-y+t) + 2 t |\xi| > 0
    \end{equation}
    so the phase $\phi$ is nonstationary, which (at least heuristically) should lead to good decay.  This is complicated somewhat by the lack of smoothness of $\phi$ (it is not $C^2$ in the $\xi$ variable), so we must proceed carefully.  

    For $a,b,c \geq 0$, we define
    \begin{equation*}
        I_{a,b,c} = \int e^{i\phi} \frac{\xi^a \partial_\xi^b \chi_{\leq k_0}(\xi)}{[\partial_\xi \phi]^c}\;d\xi
    \end{equation*}
    and
    \begin{equation*}
        J_{a,b,c} = \int e^{i\phi} \frac{\sgn(\xi)\xi^a \partial_\xi^b \chi_{\leq k_0}(\xi)}{[\partial_\xi \phi]^c}\;d\xi
    \end{equation*}
    Integrating by parts and observing that $\partial_\xi^2 \phi = 2t \sgn(\xi)$, we find that
    \begin{equation}\label{eqn:I-abc-recurrence}\begin{split}
        I_{a,b,c} =& \int \frac{1}{i\partial_\xi \phi} \partial_\xi e^{i\phi} \frac{\xi^a \partial_\xi^b \chi_{\leq k_0}(\xi)}{[\partial_\xi \phi]^c}\;d\xi\\
                    =& ai I_{a-1,b, c+1} + i I_{a, b+1, c+1} - 2(c+1)i tJ_{a,b,c+2}
    \end{split}\end{equation}
    For $J_{a,b,c}$, the presence of the nonsmooth function $\sgn(\xi)$ results in an additional boundary term for $a = 0$:
    \begin{equation}\label{eqn:J-abc-recurrence}
        J_{a,b,c} = \left. \frac{2i e^{i\phi} \xi^a\partial_\xi^b \chi_{\leq k_0}(\xi)}{[\partial_\xi \phi]^{c+1}}\right|_{\xi=0} + ai J_{a-1, b, c+1} + i J_{a,b+1, c+1} - t(c+1)i I_{a,b,c+2}
    \end{equation}
    By iterating the recursion relations $M \geq a + 2$ times, we see that
    \begin{equation}\label{eqn:I-a00-expr} \begin{split}
        I_{a,0,0} =& \sum_{m=1}^{M-a-1} C_{\text{boundary}, m}^a \frac{t^m}{(x-y+t)^{a+2+m}} +  \sum_{%
            \substack{\alpha + \beta + \gamma = M\\\alpha \geq 0, \gamma \text{ even}}%
            } t^\gamma C^a_{\alpha,\beta,\gamma} I_{a-\alpha, \beta, N+\gamma}\\
            &+ \sum_{%
            \substack{\alpha + \beta + \gamma = M\\\alpha \geq 0, \gamma \text{ even}}%
            } t^\gamma C^a_{\alpha,\beta,\gamma} J_{a-\alpha, \beta, N+\gamma}
    \end{split}\end{equation}
    for constants $C_{\text{boundary},m}^a$ and $C^a_{\alpha,\beta,\gamma}$ and with the convention that $I_{a,b,c} = J_{a,b,c} = 0$ for $a < 0$.  For $(x,y) \in \supp K(x,y)$, we have that 
    \begin{equation*}
        \left|\frac{t^m}{(x-y+t)^{a+2+m}}\right| \leq \left|\frac{t}{(x-y+t)^{a+3}}\right| \lesssim \min(t 2^{-(a+3)j}, t^{-(a+2)})
    \end{equation*}
    which is the bound required by~\eqref{eqn:low-freq-left-waves}.  Thus, it only remains to show that the remaining integral terms are lower order for $M$ sufficiently large.  Using Minkowski's inequality and observing that $\chi_{\leq k_0}$ is supported on an interval of length $\sim 2^{k_0}$, we find that
    \begin{equation*}
        |I_{a,b,c}| + |J_{a,b,c}| \lesssim \frac{2^{k_0(a-b+1)}}{(x-y+t)^c}
    \end{equation*}
    so each of the terms in the sum is bounded by
    \begin{equation*} \begin{split}
        t^\gamma \left(|I_{a-\alpha, \beta, M+\gamma}| + |J_{a-\alpha, \beta, M+\gamma}|\right) \lesssim& \frac{2^{k_0(a +1 - \alpha - \beta)} t^\gamma}{(x-y+t)^{M+\gamma}}\\
        \lesssim& 2^{(a+1 + \gamma - M) k_0} \min\left(t^\gamma 2^{-(M+\gamma)j}, t^{-M}\right)\\
        \lesssim& 2^{(a+1-M) k_0} \min\left(2^{-Mj}, t^{-M}\right)
    \end{split}\end{equation*}
    where to obtain the second line we have used the fact that $\alpha + \beta + \gamma = M$.  Now, by~\eqref{eqn:k-0-def}, we see that for $M > 2(a+1)$
    \begin{equation*}
        2^{(a+1-M) k_0} \leq 2^{-M k_0} \leq 2^{\frac{M}{2}j}
    \end{equation*}
    so
    \begin{equation*} \begin{split}
        t^\gamma \left(|I_{a-\alpha, \beta, N+\gamma}| + |J_{a-\alpha, \beta, N+\gamma}|\right) \lesssim& \min\left(2^{-\frac{M}{2}j}, t^{-\frac{M}{2}}\right)
    \end{split}\end{equation*}
    which is consistent with~\eqref{eqn:low-freq-left-waves} provided we take $N \geq 2a + 4$.

    The argument for~\eqref{eqn:low-freq-right-waves} is similar once we notice that $\partial_\xi \phi \gtrsim t$ for all the relevant values of $x,y$ and $t$.  For~\eqref{eqn:dyadic-freq-left-waves} and~\eqref{eqn:dyadic-freq-right-waves}, we simply substitute the bound
    \begin{equation*}
        \partial_\xi \phi \gtrsim (x-y) + t\jBra{2^k}
    \end{equation*}
    and repeat the analysis, observing that the frequency localization away from $0$ suppresses the boundary terms.
\end{proof}

Since
\begin{equation*}
    (\partial_x + H \partial_x^2) P^+_k = (\partial_x - i \partial_x^2) P^+_k
\end{equation*}
we immediately obtain the following corollary about the Schr\"odinger propagator with drift:
\begin{cor}\label{cor:Schro-prop}
    Let $j$ be a nonnegative integer, and $\epsilon \in (0,1)$.  Then, for any $k > k_0$ and any positive integer $M$, we have that for $t > 0$
    \begin{equation}\label{eqn:dyadic-freq-left-waves-Schro}
        \left\lVert \chi_j^{+}(x) e^{t(\partial_x - i \partial_x^2)} P_k^+ \chi_{\lesssim j}^+(x) \right\rVert_{L^1 \to L^\infty} \lesssim_M \min\left(2^{-M/2j}, t^{-M/2} \jBra{2^k}^{-M/2}\right)
    \end{equation}
    Moreover, if $\ell > j - 10$, then for $t > \frac{2^{\ell + 10}}{\jBra{2^k}}$,
    \begin{equation}\label{eqn:dyadic-freq-right-waves-Schro}
        \left\lVert \chi_j^{+}(x) e^{t(\partial_x - i \partial_x^2)} P_k^+ \chi_\ell^{+}(x) \right\rVert_{L^1 \to L^\infty} \lesssim_M t^{-M/2} \jBra{2^k}^{-M/2}
    \end{equation}
\end{cor}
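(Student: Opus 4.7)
The plan is to reduce Corollary \ref{cor:Schro-prop} directly to Theorem \ref{thm:ref-lin-est} via the symbolic identity displayed immediately before the statement. Recall that the Hilbert transform has symbol $-i\sgn(\xi)$, so $H\partial_x^2$ acts as $-i\partial_x^2$ on any function supported in positive frequencies. Consequently, $e^{t(\partial_x + H\partial_x^2)}P_k^+ = e^{t(\partial_x - i\partial_x^2)}P_k^+$ as operators, and the integral kernels of the two operators appearing on the left of \eqref{eqn:dyadic-freq-left-waves-Schro} (or \eqref{eqn:dyadic-freq-right-waves-Schro}) agree with the kernel of $\chi_j^+(x) e^{t(\partial_x + H\partial_x^2)} P_k^+ \chi_{\lesssim j}^+(x)$.

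First, I would write the kernel of $\chi_j^+(x) e^{t(\partial_x - i\partial_x^2)} P_k^+ \chi_{\lesssim j}^+(x)$ explicitly as
\begin{equation*}
    K(x,y) = \frac{\chi_j^+(x)\chi_{\lesssim j}^+(y)}{2\pi} \int_0^\infty e^{i\xi(x-y+t) + it\xi^2}\chi_k(\xi)\;d\xi.
\end{equation*}
Since $|\xi|\xi = \xi^2$ on $(0,\infty)$, this is precisely the positive-frequency portion of the oscillatory integral analyzed in the proof of Theorem \ref{thm:ref-lin-est} with $a = 0$. The $L^1 \to L^\infty$ bound the corollary asserts is equivalent to a pointwise bound on $K(x,y)$.

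Next, I would repeat the integration-by-parts argument of the theorem restricted to the half-line $\xi \in (0, \infty)$. On this interval the phase $\phi = \xi(x-y+t) + t\xi^2$ is smooth (the nonsmoothness of $|\xi|\xi$ at the origin, which was responsible for the boundary $J_{a,b,c}$ pieces in \eqref{eqn:J-abc-recurrence}, does not enter), and $\chi_k$ is supported away from $\xi = 0$, so no endpoint contributions arise when we differentiate $e^{i\phi}/(i\partial_\xi\phi)$. Only the recurrence \eqref{eqn:I-abc-recurrence} for $I_{0,0,c}$ is needed, with all $J$-terms and boundary corrections at $\xi = 0$ absent. Iterating $M$ times and using
\begin{equation*}
    \partial_\xi \phi = (x-y+t) + 2t\xi \gtrsim (x-y) + t\jBra{2^k}
\end{equation*}
on the support $\xi \sim 2^k$ (valid under $k > k_0$), we obtain \eqref{eqn:dyadic-freq-left-waves-Schro} and \eqref{eqn:dyadic-freq-right-waves-Schro} by exactly the same estimates that yielded \eqref{eqn:dyadic-freq-left-waves} and \eqref{eqn:dyadic-freq-right-waves} in the theorem.

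There is essentially no substantive obstacle: the corollary is a clean specialization of the $a = 0$ case of Theorem \ref{thm:ref-lin-est} to positive frequencies, and the argument is in fact mildly simpler because the nonsmoothness of the dispersion relation at $\xi = 0$ does not enter the analysis. The only sanity check is that the single-sided phase bound for $\partial_\xi \phi$ is still available, which is immediate from $\xi \sim 2^k > 0$ together with $x - y + t > 0$ on the support of $\chi_j^+(x) \chi_{\lesssim j}^+(y)$ in the regimes of \eqref{eqn:dyadic-freq-left-waves-Schro} and \eqref{eqn:dyadic-freq-right-waves-Schro}.
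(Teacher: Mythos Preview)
Your proposal is correct and matches the paper's own argument: the paper simply notes the operator identity $(\partial_x + H\partial_x^2)P_k^+ = (\partial_x - i\partial_x^2)P_k^+$ and declares the corollary an immediate consequence of Theorem~\ref{thm:ref-lin-est}, which is exactly your reduction. Your additional remark that the boundary terms from the nonsmoothness at $\xi = 0$ drop out when the cutoff is $\chi_k^+$ is accurate and explains why the one-sided projector causes no trouble, even though the theorem is literally stated for $P_k$ rather than $P_k^+$.
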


\section{The normal form/approximate gauge transformation}\label{sec:NFGT}

We now show how to obtain an appropriate normal form transformation for the equation.  To begin, we project the solutions to positive frequencies of size $\sim 2^k$:
\begin{equation*}
    \left(\partial_t + i \partial_x^2\right) u_k^+ = -P_k^+ \partial_x \left(u^2\right)
\end{equation*}
Note that we could also analogously define $u_k^-$ be projecting to negative frequencies of size $\sim 2^k$.  However, since we are interested in real-valued solutions, this is unnecessary ($u$ can be reconstructed based only on $\hat{u}(\xi)$ for $\xi > 0$), so we will omit them.  Rearranging the previous equation yields the Schr\"odinger-type equation
\begin{equation*}
    \left( i\partial_t - \partial_x^2 \right) u_k^+ = -i P_k^+ \partial_x \left( u \right)^2
\end{equation*}
We also define the function
\begin{equation*}
    \Phi(x,t) = -\int_x^\infty u(x,t)\;dx
\end{equation*}
and note that $\Phi$ satisfies the equation
\begin{equation}\label{eqn:Phi-BO}
    \partial_t \Phi - H \partial_x^2 \Phi = -\left(\partial_x \Phi\right)^2
\end{equation}
With these ingredients in hand, we define the $N$th order normal form/gauge transformation $u_k^+ \mapsto v_k^+$ to be
\begin{equation}\label{eqn:NFGT-def}
    v_{k}^+ = v_{k,N}^+ := \left(u_k^+ + B_{k,N}^+(u,u)\right)E_N(\Phi_{\ll_N k})
\end{equation}
where 
\begin{equation*}
    E_N(x) = \sum_{n=0}^N \frac{(-ix)^n}{n!}
\end{equation*}
is the $N$th order Taylor polynomial for $\exp(-ix)$ and $B_{k,N}^+$ is a bilinear pseudoproduct that we will choose to approximately cancel the quadratic terms.  To determine $B_{k,N}^+$, we first observe that
\begin{equation}\label{eqn:NG-trans-raw}\begin{split}
    (i\partial_t - \partial_x^2) v_k^+ =& [(\partial_t + i\partial_x^2) \Phi]_{\ll_N k} \left(u_k^+ + B_{k,N}^+(u,u)\right)E_{N-1}(\Phi_{\ll_N k})\\
                                        &+ 2i \partial_x \Phi_{\ll_N k} \partial_x \left(u_k^+ + B_{k,N}^+(u,u)\right)E_{N-1}(\Phi_{\ll_N k})\\
                                        &- iP_k^+ \partial_x(u^2) E_N(\Phi_{\ll_N k})\\
                                        &+ \left((i\partial_t - \partial_x^2) B_{k,N}^+(u,u)\right) E_N(\Phi_{\ll_N k})
\end{split}
\end{equation}
For the first term, we use~\eqref{eqn:Phi-BO} and the fact that $\partial_x \Phi = u$ to conclude that
\begin{equation}\label{eqn:NG-trans-1-1}\begin{split}
    (\partial_t + i \partial_x^2) \Phi_{\ll_N k} =& \left((\partial_t - H \partial_x^2) \Phi\right)_{\ll_N k} + (H+ i) \partial_x^2 \Phi_{\ll_N k}\\
                                            =& -\left(u^2\right)_{\ll_N k} + (H+i) \partial_x u_{\ll_N k}
\end{split}\end{equation}
Moreover, for the pseudoproduct involving $B_{k,N}^+(u,u)$, we see that
\begin{equation}\label{eqn:NG-trans-1-2}\begin{split}
    (i\partial_t - \partial_x^2) B_{k,N}^+(u,u) =& B_{k,N}^+((i\partial_t - \partial_x^2)u,u) + B_{k,N}^+(u,(i\partial_t - \partial_x^2)u) \\&- 2 B_{k,N}^+(\partial_x u, \partial_x u)\\
    =& -iB_{k,N}^+(\partial_x(u^2), u) - iB_{k,N}^+(u, \partial_x (u^2)) \\
    &+ iB_{k,N}^+((H+i)\partial_x^2 u, u) + iB_{k,N}^+(u, (H+i)\partial_x^2 u) \\
    &- 2 B_{k,N}^+(\partial_x u, \partial_x u)
\end{split}\end{equation}
Inserting~\eqref{eqn:NG-trans-1-1} and~\eqref{eqn:NG-trans-1-2} into~\eqref{eqn:NG-trans-raw} and rearranging, we obtain
\begin{equation}\label{eqn:NG-trans-ref}\begin{split}
    (i\partial_t - \partial_x^2) v_k^+ =& \mathcal{B}_{k,N}^+(u,u) E_{N-1}(\Phi_{\ll_N k})\\
    &+ \left(\mathcal{B}_{k,N}^+(u,u) + \mathcal{B}_{k,N}^{+,\text{rem}}(u,u)\right) \frac{\left(-i \Phi_{\ll_N k}\right)^N}{N!} \\
    &+ C_{k,N}^+(u,u,u) E_{N-1}(\Phi_{\ll_N k}) + \tilde{C}_{k,N}^+(u,u,u) \frac{\left(-i \Phi_{\ll_N k}\right)^N}{N!}\\
    &+ Q_{k,N}^+(u,u,u,u) E_{N-1}(\Phi_{\ll_N k})
\end{split}
\end{equation}
where
\begin{align}\begin{split}
    \mathcal{B}_{k,N}^+(u,u) =& -i P_k^+\partial_x(u^2) + (H+i)\partial_x u_{\ll_N k} u_k^+ + 2i u_{\ll_N k} \partial_x u_k^+ \\
    & + i B_{k,N}^+((H+i)\partial_x^2 u, u)+ i B_{k,N}^+(u, (H+i)\partial_x^2 u)\\
    &- 2 B_{k,N}^+(\partial_x u, \partial_x u)
\end{split}\label{eqn:calB-p-k-def}\\[2ex]
    \mathcal{B}_{k,N}^{+,\text{rem}}(u,u) =& (H+i)\partial_x u_{\ll_N k} u_k^+ + 2iu_{\ll_N k} \partial_x u_k^+ \label{eqn:calB-p-k-rem-def}\\[2ex]
    \tilde{C}_{k,N}^+(u,u,u) =& -iB_{k,N}^+(\partial_x(u^2), u) - iB_{k,N}^+(u, \partial_x (u^2))\label{eqn:C-tilde-p-k-def}\\[2ex]
    \begin{split}
    C_{k,N}^+(u,u,u) =& \tilde{C}_{k,N}^+(u,u,u) - (u^2)_{\ll_N k} u_k^+\\
    &+ \left(2i u_{\ll_N k} \cdot \partial_x + (H+i) \partial_x u_{\ll_N k}\right) B_{k,N}^+(u,u)
    \end{split}\label{eqn:C-p-k-def}\\[2ex]
    Q_{k,N}^+(u,u,u,u) =& -(u^2)_{\ll_N k} B_{k,N}^+(u,u)\label{eqn:Q-p-k-def}
\end{align}
We will choose $B^+_{k,N}$ so that $\mathcal{B}^+_{k,N}(u,u) = 0$.  Taking the Fourier transform and recalling that
\begin{equation}\label{eqn:Hpmi-ident}\mathcal{F} 
    (H\pm i) = i(1 \mp \sgn(\xi)) = 2i \bbOne_{\mp \xi > 0}
\end{equation}
we see that the only choice for $b_{k,N}^+$ that is symmetric when we interchange the roles of $\eta$ and $\xi - \eta$ is
\begin{equation}\label{eqn:NF-symb}\begin{split}
    2\left((\xi-\eta)_-^2 + \eta_{-}^2 + (\xi-\eta)\eta\right)b_{k,N}^+ =& \chi_k^+(\xi) \xi - \chi_k^+(\xi-\eta)\chi_{\ll_N k}(\eta) \left((\xi-\eta) + \eta_- \right)\\
    &- \chi_k^+(\eta)\chi_{\ll_N k}(\xi-\eta) \left(\eta + (\xi-\eta)_{-}\right)
\end{split}\end{equation}
where $x_- = \frac{1}{2}(|x| - x)$ denotes the negative part of $x$, and for notational simplicity we adopt the convention that $x_-^2 = (x_-)^2$.  

Although the symbol~\eqref{eqn:NF-symb} formally defines the quadratic correction for the normal form transformation, there are two concerns that need to be addressed.  First, it is not clear as written that the symbol is nonsingular on $(\xi-\eta)_-^2 + \eta_{-}^2 + (\xi-\eta)\eta = 0$.  Second, the symbol is not smooth, which presents problems if we want to apply the $L^p$ theory developed in~\Cref{sec:pseudo-prod-ops}.  To address both of the difficulties, we will show that there are smooth, nonsingular symbols $b_k^{+,\pm\pm\pm}(\xi,\eta)$ supported in the region $\{\xi \sim 2^k\}$ with the property that
\begin{equation*}
    b_{k,N}^+(\xi,\eta) = \sum_{e_1,e_2,e_3 \in \{+,-\}} \bbOne_{e_1\xi > 0} \bbOne_{e_2(\xi-\eta) > 0} \bbOne_{e_3\eta > 0} b_{k,N}^{+,e_1e_1e_3}(\xi,\eta)
\end{equation*}
We will also show that all of the symbols $b^{+,-e_2e_3}_{k,N}$ can be chosen to be zero.  Moreover, by the support condition,
$$|\xi-\eta| + |\eta| \gtrsim 2^k$$
so at most one of the frequencies $\xi-\eta$ and $\eta$ can vanish.  Since $(H \pm i) P^\pm_{\gtrsim k}$ has a smooth symbol, this allows us to write 
\begin{equation}\label{eqn:B-k-N-decomp}\begin{split}
    B_{k,N}^+(u,v)  =&\!\!\!\! \sum_{e_i \in \{+,-\}}\!\!\!\! P^{\pm e_1}B_{k,N}^{+,e_1e_1e_3}(P^{\pm e_2}u,P^{\pm e_3}v)\\
                =&\!\!\!\!\!\! \sum_{e_2,e_3 \in \{+,-\}}\!\!\!\!\!\!  B_{k,N,1}^{+,+e_2e_3}(u,P^{\pm e_3}_{\lesssim k}v) +  B_{k,N,2}^{+,+e_2e_3}(P^{\pm e_2}_{\lesssim k}u,v) \\
                &\qquad\qquad+ B_{k,N,3}^{+,+e_2e_3}(u,v)
\end{split}\end{equation}
where the second line expresses the fact that (1) there is no Hilbert transform applied to the bilinear pseudoproducts $B^{+,+e_2,e_3}_{k,N,j}(u,u)$, and (2) at most one of the input functions contains a Hilbert transform.  We will provide a proof of this decomposition in~\Cref{sec:b-k-N-decomp}.  We will then finish this section by proving decay estimates for the pseudoproduct operators $B_{k,N}^+(w,w)$, $\mathcal{B}_{k,N}^{+,\text{rem}}(w,w)$, $Q_{k,N}^+(w,w,w,w)$, and $C_{k,N}^+(w,w,w)$, where 
\begin{equation}\label{eqn:w-def}
    w(x,t) = u(x+t,t)
\end{equation}
is the expression for the solution in a moving reference frame.

\subsection{Expressions for the \texorpdfstring{$b_{k,N}^{+,e_1e_2e_3}$}{b\_(b,k)\^(+, e\_1e\_2e\_3}}\label{sec:b-k-N-decomp}

Here, we give expressions for the symbols $b_{k,N}^{+,e_1e_2e_3}$ appearing in~\eqref{eqn:B-k-N-decomp}, and show that they obey the required conditions.

\subsubsection{The symbol $b_k^{+,+++}$:}
The symbol $b_{k,N}^{+,+++}$ is given by
\begin{equation}\label{eqn:b-k-p-ppp}\begin{split}
    b_{k,N}^{+,+++}(\xi,\eta) =& \frac{\chi_k^+(\xi)\xi - \chi_k^+(\xi-\eta)\chi_{\ll_N k}(\eta)(\xi-\eta) - \chi_k^+(\eta) \chi_{\ll_N k}(\xi - \eta) \eta}{2(\xi-\eta) \eta}\\
    =& \frac{\chi_k^+(\xi) - \chi_{k}^+(\xi-\eta) \chi_{\ll_N k}(\eta)}{2\eta} + \frac{\chi_k^+(\xi) - \chi_{k}^+(\eta) \chi_{\ll_N k}(\xi-\eta)}{2(\xi-\eta)}\\
    =& \chi_{\ll_N k}(\eta) \frac{\chi_k^+(\xi) + \chi_k^+(\xi-\eta)}{2\eta} + \chi_k^+(\xi) \frac{\chi_{\gtrsim_N k}(\eta)}{2\eta} \\
    &+\chi_{\ll_N k}(\xi-\eta) \frac{\chi_k^+(\xi) - \chi_k^+(\eta)}{2(\xi-\eta)} + \chi_k^+(\xi) \frac{\chi_{\gtrsim_N k}(\xi-\eta)}{2(\xi-\eta)}
\end{split}\end{equation}
which by the mean value theorem is smooth and nonsingular, supported in the region $|\xi|+|\eta| \gtrsim 2^{k}$, and satisfies the symbol-type bounds
\begin{equation*}
    |\partial_\xi^a \partial_\eta^b b_{k,N}^{+,+++}(\xi,\eta)| \lesssim_{a,b,N} (|\xi|+|\eta|)^{-a-b-1}
\end{equation*}

\subsubsection{The symbols $b_{k,N}^{+,++-}$ and \textbf{$b_{k,N}^{+,+-+}$}}
Here, we have
\begin{equation}\label{eqn:b-k-p-ppm}\begin{split}
    b_{k,N}^{+,++-}(\xi,\eta)   =& \frac{\chi_k^+(\xi) - \chi_k^+(\xi-\eta) \chi_{\ll_N k}(\eta)}{2\eta}\\
                            =& \chi_k^+(\xi) \frac{\chi_{\gtrsim_N k}(\eta)}{2\eta} + \chi_{\ll_N k}(\eta) \frac{\chi_k^+(\xi) - \chi_k^+(\xi-\eta)}{2\eta}
\end{split}
\end{equation}
which is smooth, nonsingular, and supported in the region $|\xi| + |\eta| \gtrsim 2^k$.  By symmetry,
\begin{equation}\label{eqn:b-k-p-pmp}
    b_{k,N}^{+,+-+}(\xi,\eta) = b_{k,N}^{+,++-}(\xi,\xi-\eta)
\end{equation}
also satisfies the same bounds.

\subsubsection{The symbols $b_{k,N}^{+,+--}$ and $b_{k,N}^{+,-++}$}
Since it is not possible for two negative frequencies to sum to a positive frequency or for two positive frequencies to sum to a negative frequency, we may take
\begin{equation}\label{eqn:b-k-pmm-mpp}
    b_{k,N}^{+,+--}(\xi,\eta) = b_{k,N}^{+,-++}(\xi,\eta) = 0
\end{equation}

\subsubsection{The symbols $b_{k,N}^{+,---}$, $b_{k,N}^{+,-+-}$ and $b_{k,N}^{+,--+}$}
We will show that all of the $b_{k,N}^{+,---}$, $b_{k,N}^{+,-+-}$ and $b_{k,N}^{+,--+}$ vanish, as required in~\eqref{eqn:B-k-N-decomp}.  We begin by defining $b_{k,N}^{+,---}$.  From~\eqref{eqn:NF-symb}, we see that
\begin{equation}\label{eqn:b-k-mmm}
    b_{k,N}^{+,---}(\xi,\eta) = 0
\end{equation}
For $b_{k,N}^{+,-+-}$, naively inserting the condition $\xi < 0$, $\xi - \eta > 0$, $\eta < 0$ into~\eqref{eqn:NF-symb} gives
\begin{equation*}
    -\frac{\chi_k^+(\xi-\eta) \chi_{\ll_N k}(\eta)}{2\eta}
\end{equation*}
This is both nonzero and singular, and thus clearly unacceptable.  A more careful analysis shows that in the region $\xi-\eta \sim 2^k$ and $\xi, \eta < 0$, the function $\chi_k^+(\xi-\eta) \chi_{\ll_N k}(\eta)$ vanishes equivalently, so we may take
\begin{equation}\label{eqn:b-k-p-mpm}
    b_{k,N}^{+,-+-}(\xi,\eta) = 0
\end{equation}
By symmetry, this also gives
\begin{equation}\label{eqn:b-k-p-mmp}
    b_{k,N}^{+,--+}(\xi,\eta) = b_{k,N}^{+,-+-}(\xi,\xi-\eta) = 0
\end{equation}

\subsection{Decay for \texorpdfstring{$B_{k,N}^+(w,w)$}{B\_(k,n)\^+(w,w)}}\label{sec:B-k-decay}

Using the expression~\eqref{eqn:B-k-N-decomp} derived in the previous section, we will now obtain spatial decay bounds for the bilinear expression $B_{k,N}^+(w,w)$ under the decay hypothesis~\eqref{eqn:main-decay-hypo}.  We first observe that~\eqref{eqn:main-decay-hypo} is equivalent to the statement that
\begin{equation*}
    \lVert \chi^\pm_j w \rVert_{L^\infty} \lesssim 2^{-(1+\epsilon)j}
\end{equation*}
By symmetry, $B_{k,N}^+(w,w)$ can be written as a sum of order $-1$ bilinear pseudoproducts of the form
$$\tilde{B}_{k,N}^+(w, P^\mp_{\lesssim k}w)$$
or
$$\tilde{B}_{k,N}^+(w, w)$$
We will give decay estimates for the first type of terms -- the second type, which does not contain Hilbert transforms, has better spatial decay.  We have that
\begin{subequations}\begin{align}
    \lVert \chi^\pm_j(x) \tilde{B}_{k,N}^+(w, P^\mp_{\lesssim k} w) \rVert_{L^\infty} \!\!\leq\!\!\!\!\!\!\!\!\!\!\!\!\!&\qquad\; \lVert \chi^\pm_j(x) \tilde{B}_{k,N}^+\bigl(\chi^\pm_{\sim j}(x) w, P_{\lesssim k}\chi^\pm_{\sim j}(x)P^{\mp} w\bigr) \rVert_{L^\infty}\label{eqn:B-k-p-space-bd-main}\\
    &+\! \lVert \chi^\pm_j(x) \tilde{B}_{k,N}^+\bigl(\bigl[1-\chi^\pm_{\sim j}(x)\bigr] w, P^{\mp} w_{\lesssim k}\bigr) \rVert_{L^\infty}\label{eqn:B-k-p-space-bd-err1}\\
    &+\! \lVert \chi^\pm_j(x) \tilde{B}_{k,N}^+\bigl(\chi^\pm_j(x) w, P_{\lesssim k}\bigl[1-\chi^\pm_{\sim j}(x)\bigr] P^{\mp} w\bigr) \rVert_{L^\infty}\label{eqn:B-k-p-space-bd-err2}
\end{align}\end{subequations}
For the second term, by the pseudolocality principle we have that
\begin{equation*}\begin{split}
    \eqref{eqn:B-k-p-space-bd-err1} \lesssim_M& 2^{-M(j+k)} \lVert \tilde{B}_k^+\left(\left[1-\chi^\pm_{\sim j}(x)\right] w, P^\mp_{\lesssim k} w\right) \rVert_{L^\infty_x}\\
    \lesssim_M& 2^{-M(j+k)} 2^{-k} \lVert w \rVert_{L^\infty} \lVert P^\mp_{\lesssim k}  w \rVert_{L^\infty}\\
    \lesssim_M& 2^{-M(j+k)} 2^{-k}  2^{k/2} \lVert P^\mp_{\lesssim k} w \rVert_{L^2}\\
    \lesssim_M& 2^{-M(j+k)} 2^{-k/2}
\end{split}\end{equation*}
In particular, for $k > -\frac{1-\epsilon}{2} j$, these terms decay very rapidly in $j$.  A similar argument applies to the last term~\eqref{eqn:B-k-p-space-bd-err2}.  Turning to the main term~\eqref{eqn:B-k-p-space-bd-main}, we see that
\begin{equation*}\begin{split}
    \eqref{eqn:B-k-p-space-bd-main} \lesssim& 2^{-k} \lVert \chi^\pm_{\sim j}(x) w \rVert_{L^\infty} \lVert P_{\lesssim k} \chi_j^\pm(x) P^\mp w \rVert_{L^\infty}\\
    \lesssim& 2^{-(2+\epsilon)j} 2^{-k} \jBra{2^{\epsilon k}}
\end{split}
\end{equation*}
where on the last line we have used the commutator bound from~\Cref{lem:H-comm-bd} to estimate
\begin{equation*}\begin{split}
    \lVert P_{\lesssim k} \chi_j^\pm(x) P^\mp w \rVert_{L^\infty} \lesssim& \lVert P_{\lesssim k} [\chi_j^\pm(x), H] w \rVert_{L^\infty} + \lVert P_{\lesssim k}^\mp   \chi_j^\pm(x)  w \rVert_{L^\infty}\\
    \lesssim& 2^{-j} + 2^{k/q} \lVert \chi_j^\pm(x)  w  \rVert_{L^{q}}\\
    \lesssim& 2^{-j} \jBra{2^{\min(\epsilon, 1/100) k}}
\end{split}\end{equation*}
with the choice $q = \max(\epsilon^{-1}, 100)$ giving the last line.  It follows that for $k > -\frac{1-\epsilon}{2}j$, we have the bound
\begin{equation}\label{eqn:B-k-bd}
    \lVert \chi_j^+(x) B_{k,N}^+(w,w) \rVert_{L^\infty} \lesssim 2^{-(2+\epsilon)j} 2^{-k} \jBra{2^{\min(\epsilon, 1/100) k}}
\end{equation}

\subsection{Decay for \texorpdfstring{$\mathcal{B}_{k,N}^{+,\text{rem}}(w,w)$}{B\_(k,n)\^(+,rem)(w,w)}}\label{sec:B-k-rem-decay}

We now give a decay estimate for the quadratic terms $\mathcal{B}_{k,N}^{+,\text{rem}}(w,w)$ that were not removed by the normal form.  In contrast to the other terms in this section, the remainder terms contain derivatives and require us to use the condition $\lVert w \rVert_{L^\infty_t \dot{B}^{1/2}_{2,1}} < \infty$, so we will estimate these terms in $L^2$ spaces.  By~\eqref{eqn:calB-p-k-rem-def}, it suffices to prove bounds for
\begin{equation}\label{eqn:calB-rem-1}
	\lVert \chi_j^+(x) w_{\ll_N k} \partial_x w_k^+ \rVert_{L^2}
\end{equation}
and
\begin{equation}\label{eqn:calB-rem-2}
	\lVert \chi_j^+(x) \left(P^- \partial_x w_{\ll_N k}\right) w_k^+ \rVert_{L^2}
\end{equation}
For~\eqref{eqn:calB-rem-1}, we estimate
\begin{equation*}\begin{split}
	\eqref{eqn:calB-rem-1} 	\lesssim& \lVert \chi_j^+(x) w_{\ll_N k} \rVert_{L^\infty} \lVert  \partial_x w_k^+ \rVert_{L^2}\\
					\lesssim& 2^{-(1+\epsilon)j} 2^{k/2} c_k
\end{split}\end{equation*}
where we recall that $c_k$ represents a generic sequence in $\ell^1_k$.  Similarly, since the Hilbert transform is bounded on $L^2$, we have that
\begin{equation*}\begin{split}
	\eqref{eqn:calB-rem-2} 	\lesssim& \lVert \chi_j^+(x) w_{k}^+ \rVert_{L^\infty} \lVert  P^- \partial_x w_{\ll_N k} \rVert_{L^2}\\
					\lesssim& 2^{-(1+\epsilon)j} 2^{k/2} c_k
\end{split}\end{equation*}
so
\begin{equation}\label{eqn:calB-rem-bd}
	\lVert \chi_j^+(x)\mathcal{B}_{k,N}^{+,\text{rem}} (w,w) \rVert_{L^2} \lesssim 2^{-(1+\epsilon)j} 2^{k/2} c_k
\end{equation}
For later use, we also note
\begin{equation}\label{eqn:calB-rem-bd-L1}
    \lVert \mathcal{B}_{k,N}^{+,\text{rem}}(w,w) \rVert_{L^1} \lesssim 2^{k/2} c_k
\end{equation}
which follows immediately from the previous analysis since $w \in L^\infty_t L^2_x$.

\subsection{Quartic term bounds}

For the quartic terms, we take advantage of the bound~\eqref{eqn:B-k-bd} for $B_{k,N}^+ (w,w)$ and pseudolocality for $P_{\ll_N k}$ to obtain the bound
\begin{equation}\label{eqn:Q-k-bd}\begin{split}
    \lVert \chi_j^+ Q_k^+(w,w,w,w) \rVert_{L^\infty} \lesssim& \left\lVert \left(\chi_{\sim j}^+(x) w\right)_{\ll_N k} \chi_j^+ B_{k,N}^+(w,w) \right\rVert_{L^\infty_x} + \{ \textup{better}\}\\
    \lesssim& 2^{-(4+3\epsilon)j} 2^{-k}\jBra{2^{\min(\epsilon, 1/100) k}}
\end{split}\end{equation}

\subsection{Cubic term bounds}

We will show how to bound $C_{k,N}^+(w,w,w)$ by bounding each term in~\eqref{eqn:C-p-k-def}.  Since this includes the terms in $\tilde{C}_{k,N}^+(w,w,w)$, we will also obtain a bound for this expression. 
 Substituting~\eqref{eqn:C-tilde-p-k-def} into~\eqref{eqn:C-p-k-def}, we see that $C_{k,N}^+(w,w,w)$ is given by the expression
\begin{equation*}\begin{split}
    C_{k,N}^+(w, w, w) =& -(w^2)_{\ll_N k} w_k^+ - P^- \partial_x w_{\ll_N k} B_{k,N}^+(w, w)+ 2i w_{\ll_N k} \partial_x B_{k,N}^+(w, w)\\
    &- i B_{k,N}^+(\partial_x(w^2), w) - i B_{k,N}^+(w, \partial_x (w^2))\\
    =:& C_{k,N}^{+,1}(w,w,w) + C_{k,N}^{+,2}(w,w,w) + C_{k,N}^{+,3}(w,w,w)\\
    &+ C_{k,N}^{+,4}(w,w,w) + C_{k,N}^{+,5}(w,w,w)
\end{split}\end{equation*}
We will show how to bound each of these terms in the region $x \sim +2^j$.  For $C_{k,N}^{+,1}$, by pseudolocality,
\begin{equation}\label{eqn:C-k-p1-bd}\begin{split}
    \lVert \chi_j^+(x) C_{k,N}^{+,1}(w,w,w) \rVert_{L^\infty} \lesssim& \lVert (\chi_{\sim j}^+(x) w^2)_{\ll_N k} (\chi_{\sim j}^+(x) w)_k^+\rVert_{L^\infty} + \{\textup{better}\}\\
    \lesssim& 2^{-(3+3\epsilon)j}
\end{split}\end{equation}
For $C_{k,N}^{+,2}$, the main idea is to commute the $\chi_{\sim j}^+$ multiplier through the other terms.  However, this must be done carefully, as the Hilbert transform is not bounded on $L^\infty$.  To deal with this, we use the operator identity
\begin{equation}\label{eqn:4-op-ID}\begin{split}
    P_{\leq k} \chi^+_j H \partial_x = P_{\leq k} \left\{H (\partial_x \chi_j^+ + [\chi^+_j, \partial_x]) + \partial_x[\chi_j^+,H] + [H,[\partial_x, \chi^+_j]]\right\}
\end{split}\end{equation}
Note that the operator $[\partial_x, \chi^+_j]$ corresponds to multiplying by the function $\partial_x \chi^+_j(x)$, so
\begin{equation*}
    \lVert [H, [\partial_x, \chi^+_j]] \rVert_{L^1 \to L^\infty} \lesssim 2^{-2j}
\end{equation*}
by~\Cref{lem:H-comm-bd}.  Combining the identity~\eqref{eqn:4-op-ID} with pseudolocality for $P_{\ll_N k}$ now gives
\begin{equation*}\begin{split}
    \lVert \chi_j^+(x) \partial_x P^- P_{\ll_N k} w \rVert_{L^\infty} \leq& \lVert P_{\ll_N k} \chi_{\sim j}^+(x) \partial_x P^-  w \rVert_{L^\infty} + \{\textup{better}\}\\
    \leq& \left\lVert P_{\ll_N k}H (\partial_x \chi_{\sim j}^+ + [\chi^+_{\sim j}, \partial_x]) \right\rVert_{L^\infty}\\
    &+ \left\lVert \partial_x[\chi_{\sim j}^+,H] + [H,[\partial_x, \chi^+_{\sim j}]] w \right \rVert_{L^\infty} + \{\textup{better}\}\\
    \lesssim& 2^{(1+ 1/q)k} \lVert \chi_{\sim j} w \rVert_{L^q} + 2^{k/q} \lVert [\chi_{\sim j},\partial_x] w \rVert_{L^q} \\
    &+ \lVert \partial_x [\chi_{\sim j}^+,H] w \rVert_{L^\infty} + \lVert [H,[\partial_x, \chi_{\sim j}^+]] w \rVert_{L^\infty}\\
    \lesssim& 2^{k/q} \left(2^k + 2^{-j}\right) 2^{-(1+\epsilon-1/q)j} + 2^{-2j}
\end{split}\end{equation*}
for $q \in (1,\infty)$.  Choosing $1/q = \min(1/100, \epsilon) \in (0,1)$ and observing that 
\begin{equation*}
    2^{(1+1/q)k} \geq 2^{(1+1/q)\frac{-1+\epsilon}{2} j} \gg 2^{-j}
\end{equation*} 
we find that
\begin{equation}\label{eqn:4-op-lin}\begin{split}
    \lVert \chi_j^+(x) \partial_x (H+i) P_{\ll k} w \rVert_{L^\infty} 
    \lesssim& 2^{(1+\min(1/100, \epsilon))k} 2^{-j}
\end{split}\end{equation}
which gives the bound
\begin{equation}\label{eqn:C-k-p2-bd}
    \lVert \chi_j^+ C_{k,N}^{+,2}(w,w,w) \rVert_{L^\infty} \lesssim \jBra{2^{2\min(1/100, \epsilon) k}} 2^{-(3+\epsilon)j}
\end{equation}
The argument for $C_{k,N}^{+,3}$ is simpler: there is no Hilbert transform to contend with, and the frequency localization of $B_{k,N}^+(w,w)$ guarantees us that $${\lVert \chi_j^+(x) \partial_x B_{k,N}^+(w,w) \rVert_{L^\infty} \lesssim \jBra{2^{\min(1/100, \epsilon) k}} 2^{-(2+\epsilon)j}}$$ giving us the bound
\begin{equation}\label{eqn:C-k-p3-bd}
    \lVert \chi_j^+(x) C_{k,N}^{+,3}(w,w,w) \rVert_{L^\infty} \lesssim \jBra{2^{\min(1/100, \epsilon) k}} 2^{-(3+2\epsilon)j}
\end{equation}
For $C_{k,N}^{+,4}$, we expand the pseudoproduct $B_{k,N}^+(\partial_x(w)^2,w)$ to write
\begin{equation*}\begin{split}
    C_{k,N}^{+,4}(w,w,w) = i \tilde{B}_{k,N}^+(P_{\lesssim k}^\mp \partial_x(w^2),w) + i \tilde{B}_{k,N}^+(\partial_x (w^2), P_{\lesssim k}^\mp w)+ \{\text{easier}\}
\end{split}\end{equation*}
where $\{\text{easier}\}$ denotes pseudoproducts with no Hilbert transforms, with are simpler to handle.  Noting that $\tilde{B}_{k,N}^+(\partial_x f, g)$ is an order-$0$ pseudoproduct operator, pseudolocality gives the bound
\begin{equation*}
    \lVert \chi_j^+(x) \tilde{B}_{k,N}^+(\partial_x (w^2), P_{\lesssim k}^\mp w) \rVert_{L^\infty} \lesssim \lVert \chi_{\sim j}^+(x) w^2 \rVert_{L^\infty} \lVert \chi_{\sim j}^+(x) P_{\lesssim k}^\mp w \rVert_{L^\infty}
\end{equation*}
and
\begin{equation*}\begin{split}
    \lVert \chi_{\sim j}^+(x) P_{\lesssim k}^\mp w \rVert_{L^\infty} \leq& \lVert P_{\lesssim k} \chi_{[j-20, j+20]}^+(x) P^\mp w \rVert_{L^\infty} + \{\textup{better}\}\\
    \lesssim& \lVert [\chi_{\sim j}^+(x), H] w \rVert_{L^\infty} + 2^{k/q} \lVert P_{\lesssim k}^\mp \chi_{\sim j}^+ w \rVert_{L^{q}}\\
    &+ \{\textup{better}\}\\
    \lesssim& \jBra{2^{\min(1/100, \epsilon) k}} 2^{-j}
\end{split}\end{equation*}
so
\begin{equation}\label{eqn:C-p4-k-1-bd}
    \lVert \chi_j^+(x) \tilde{B}_{k,N}^+(\partial_x (w^2), P_{\lesssim k}^\mp w) \rVert_{L^\infty} \lesssim 2^{-(3+2\epsilon)j}\jBra{2^{\min(1/100, \epsilon) k}}
\end{equation}
For $\tilde{B}_{k,N}(P_{\lesssim k}^\mp \partial_x (w^2), w)$, pseudolocality again gives the bound
\begin{equation*}\begin{split}
    \lVert \chi_j^+(x) \tilde{B}_{k,N}(P_{\lesssim k}^\mp \partial_x (w^2), w) \rVert_{L^\infty} \lesssim& 2^{-k} \lVert \chi_{\sim j}^+(x) P_{\lesssim k}^\mp \partial_x (w^2) \rVert_{L^\infty} \lVert \chi_{\sim j}^+(x)  w \rVert_{L^\infty}\\
    &+ \{\textup{better}\}
\end{split}\end{equation*}
By using the operator identity~\eqref{eqn:4-op-ID} and arguing as in the proof of~\eqref{eqn:4-op-lin}, we find that
\begin{equation*}
    \lVert \chi_{\sim j}^+(x) P_{\lesssim k}^\mp \partial_x (w^2) \rVert_{L^\infty} \lesssim 2^{k} \jBra{2^{\min(1/100, \epsilon) k}} 2^{-(2+\epsilon)j} + 2^{-2j}
\end{equation*}
so
\begin{equation}\label{eqn:C-p4-k-2-bd}
    \lVert \chi_j^+(x) \tilde{B}_{k,N}(P_{\lesssim k}^\mp \partial_x (w^2), w) \rVert_{L^\infty} \lesssim 2^{-(3+\epsilon) j} \left( \jBra{2^{\min(1/100, \epsilon) k}} 2^{-\epsilon j} + 2^{-k}\right)
\end{equation}
Combining~\eqref{eqn:C-p4-k-1-bd} and~\eqref{eqn:C-p4-k-2-bd}, we find that
\begin{equation}\label{eqn:C-k-p4-bd}
    \lVert \chi_j^+(x) C^{+,4}_{k,N}(w,w,w) \rVert_{L^\infty} \lesssim 2^{-(3+\epsilon) j} \left( \jBra{2^{\min(1/100, \epsilon) k}} 2^{-\epsilon j} + 2^{-k}\right)
\end{equation}
By symmetry, the bounds for $C^{+,5}_{k,N}(w,w,w)$ are identical.  

Combining the bounds~\cref{eqn:C-k-p1-bd,eqn:C-k-p3-bd,eqn:C-k-p2-bd,eqn:C-k-p4-bd}, we find that
\begin{equation}\label{eqn:C-k-bd}
    \lVert \chi_j^+(x) C^{+}_k(w,w,w) \rVert_{L^\infty} \lesssim 2^{-(3+\epsilon) j} \left( 2^{2\min(1/100, \epsilon) k} + 2^{-k}\right)
\end{equation}

\section{Proof of the main theorems}\label{sec:proof-sec}

We now show how the linear estimates in~\Cref{sec:lin-ests} can be combined with the normal form transformation given in~\Cref{sec:NFGT} to prove~\Cref{thm:main,thm:main-symb-bds}.

\subsection{The estimate~\texorpdfstring{\eqref{eqn:main-decay-undiff}}{(7)}}\label{sec:undiff-decay}

Let us recall the decay hypothesis~\eqref{eqn:main-decay-hypo} from~\Cref{thm:main} here: For some $\epsilon > 0$,
\begin{equation}\label{eqn:main-decay-hypo-reprint}
    \sup_{x,t}\jBra{x}^{-(1+\epsilon)}|w(x,t)| \lesssim 1
\end{equation}
where $w(x,t) = u(x+t,t)$ is the expression for the solution in the moving reference frame.  We will show that~\eqref{eqn:main-decay-hypo-reprint} implies the improved decay estimate
\begin{equation}\label{eqn:decay-hypo-improved}
    \sup_{x,t} \jBra{x}^{\min(1+ \frac{3}{2}\epsilon, 2)} |w(x,t)| \lesssim 1
\end{equation}
Thus,~\eqref{eqn:main-decay-hypo-reprint} holds with the improved exponent $\min(1+ \frac{3}{2}\epsilon, 2)$.  After iterating this argument finitely many times (the exact number depending on $\epsilon$), we conclude that $\jBra{x}^{2}|w(x,t)| \lesssim 1$, which is~\eqref{eqn:main-decay-undiff}.  Thus, it suffices to prove that~\eqref{eqn:main-decay-hypo-reprint} implies~\eqref{eqn:decay-hypo-improved}.  To do this, we first observe that it is equivalent to prove the estimate
\begin{equation}\label{eqn:dyadic-space-loc-est}
    \lVert \chi_j^\pm (x)  w(x,t) \rVert_{L^\infty_{x,t}} \lesssim 2^{-\min(1+ \frac{3}{2}\epsilon, 2)j}
\end{equation}
for all $j \geq 0$ (with implicit constant independent of $j$).  Since~\eqref{eqn:BO} is invariant under the change of variables $u(x,t) \mapsto u(-x,-t)$, it is sufficient to prove the estimate for $x > 0$ (i.e. for the $\chi_j^+$ cut-offs).

Let us fix $j \geq 0$, and define 
\begin{equation}\label{eqn:k0-redef}
    k_0 = -\frac{1-\epsilon}{2} j
\end{equation}
as in~\eqref{eqn:k-0-def} in~\Cref{thm:ref-lin-est}.  By writing
\begin{equation*}
    w = w_{\leq k_0} + \sum_{k > k_0} (w^+_k + w^-_k)
\end{equation*}
we see that the decay bound~\eqref{eqn:dyadic-space-loc-est} will follow if we can prove appropriate bounds on the low frequency piece $w_{\leq k_0}$ and the dyadically localized functions $w^+_k$ (the bounds for $w^-_k$ follow by observing that $w$ is real-valued, so $w^-_k(x,t) = \overline{w^+_k(x,t)}$).  

\subsubsection{Spatial decay at low frequencies}\label{sec:sd-low-freq}

We begin by proving spatial localization for $w_{\leq k_0}$ consistent with~\eqref{eqn:dyadic-space-loc-est}.  From~\eqref{eqn:w-def}, we see that $w$ satisfies the equation
\begin{equation}\label{eqn:BO-moving-frame}
    \partial_t w - \partial_x w - H \partial_x^2 w = -\partial_x (w^2)
\end{equation}
It follows that for any $T > 0$
\begin{equation*}
    w(x,t) = e^{(T-t)(\partial_x + H \partial_x^2)} w(x, T) + \int_t^T e^{(s-t)(\partial_x + H \partial_x^2)} \partial_x(w(x, s)^2)\;ds
\end{equation*}
Standard dispersive estimates~\eqref{eqn:BO-disp-basic} together with~\eqref{eqn:main-decay-hypo-reprint} show that the first term vanishes in $L^\infty$ as $T \to \infty$, so we can write
\begin{equation}\label{eqn:w-duhamel-infty}
    w(x,t) = \int_t^\infty e^{(s-t)(\partial_x + H \partial_x^2)} \partial_x(w(x, s)^2)\;ds
\end{equation}
Projecting in space and frequency, we find that
\begin{subequations}\begin{align}
    \chi_j^+(x) w_{\leq k_0}(x,t) =& \int_t^\infty \chi_j^+(x)  e^{(s-t)(\partial_x + H \partial_x^2)} P_{\leq k_0} \partial_x(w^2)\;ds\notag\\
    =& \int_t^\infty \chi_j^+(x)  e^{(s-t)(\partial_x + H \partial_x^2)} P_{\leq k_0} \partial_x \chi^+_{\lesssim j} w^2 \;ds\label{eqn:low-freq-left-waves-term}\\
    &+ \sum_{\ell = j-9}^\infty \int_t^\infty \chi_j^+(x)  e^{(s-t)(\partial_x + H \partial_x^2)} P_{\leq k_0} \partial_x \chi_\ell^+(x) w^2 \;ds \label{eqn:low-freq-right-waves-term}
\end{align}
\end{subequations}
For~\eqref{eqn:low-freq-left-waves-term}, we use inequality~\eqref{eqn:low-freq-left-waves} with $a = 1$ to conclude that
\begin{equation*}\begin{split}
    \lVert \eqref{eqn:low-freq-left-waves-term} \rVert_{L^\infty} \leq& \int_t^\infty \left\lVert \chi_j^+(x)  e^{(s-t)(\partial_x + H \partial_x^2)} P_{\leq k_0} \partial_x \chi^+_{\lesssim j}) \right\rVert_{L^1 \to L^\infty} \lVert w^2(x,s) \rVert_{L^1_x} \;ds\\
    \lesssim& \int_t^\infty \min(2^{-3j}, (s-t)^{-3}) \;ds\\
    \lesssim& 2^{-2j}
\end{split}\end{equation*}
For the second term, we split the integral into the near future $s-t \leq 2^{\ell + 10}$ and far future $s-t \geq 2^{\ell + 10}$ regions.  Using~\Cref{thm:ref-lin-est} in the far future and the standard $L^1 \to L^\infty$ estimate~\eqref{eqn:BO-disp-basic} in the near future, we find that
\begin{equation*}\begin{split}
    \lVert \eqref{eqn:low-freq-right-waves-term} \rVert_{L^\infty} \leq& \sum_{\ell = j-9}^\infty \int_{t+2^{\ell+10}}^\infty \left\lVert \chi_j^+(x)  e^{(s-t)(\partial_x + H \partial_x^2)} P_{\leq k_0} \partial_x \chi_\ell^+(x) w^2(x,s) \right\rVert_{L^1_x} \;ds\\
    &+ \sum_{\ell = j-9}^\infty \int_t^{t+2^{\ell+10}} \lVert e^{(s-t)(\partial_x + H \partial_x^2)}  P_{\leq k_0} \partial_x  \chi_{\ell}^+(x) w^2(x,s) \rVert_{L^1_x} \;ds\\
    \lesssim& \sum_{\ell=j-9}^\infty \int_{t+2^{\ell+10}}^\infty(s-t)^{-3} 2^{-(1+2\epsilon) \ell} \;ds\\
    &+ \sum_{\ell=j-9}^\infty \int_{t}^{t+2^{\ell+10}} (s-t)^{-1/2} 2^{k_0} 2^{-(1+2\epsilon) \ell} \;ds\\
    \lesssim& 2^{-(3+2\epsilon) j} + 2^{k_0} 2^{-(1/2 + 2\epsilon)j}\\
    \lesssim& 2^{-(1+ \frac{3}{2}\epsilon)j}
\end{split}\end{equation*}
Both of these bounds are consistent with~\eqref{eqn:dyadic-space-loc-est}, completing the argument for $w_{\leq k_0}$.

\subsubsection{Spatial decay for $w_k^+$: Reduction to estimates on $\tilde{w}_k^+$}

We now prove that the $w_k^+$ terms have spatial decay consistent with~\eqref{eqn:dyadic-space-loc-est}.  To begin, we recall that by~\eqref{eqn:NFGT-def} and~\eqref{eqn:NG-trans-ref},
\begin{equation*}
    v_k^+ = \left(u_k^+ + B_{k,N}^+(u,u)\right) E_N(\Phi_{\ll_N k})
\end{equation*}
satisfies the equation
\begin{equation*}\begin{split}
    (i\partial_t - \partial_x^2) v_k^+ =& \mathcal{B}_{k,N}^{+,\text{rem}}(u,u) \frac{(\Phi_{\ll_N k})^N}{N!} + \tilde{C}_{k,N}^+(u,u,u) \frac{(-i \Phi_{\ll_N k})^N}{N!}\\
    &+ C_{k,N}^+(u,u,u) E_{N-1}(\Phi_{\ll_N k})  + Q_{k,N}^+(u,u,u,u) E_{N-1}(\Phi_{\ll_N k})
\end{split}\end{equation*}
Let us define
\begin{equation}\label{eqn:Psi-def}
    \Psi(x,t) = \Phi(x+t,t)
\end{equation}
and
\begin{equation}\label{eqn:tilde-w-def}
    \tilde{w}_k^+(x,t) = v_k^+(x+t,t) = \left(w_k^+ + B_{k,N}^+(w,w)\right) E_N(\Psi_{\ll_N k})
\end{equation}
Now, since $w \in L^\infty_tL^1_x$, the antiderivative $\Psi \in L^\infty_{t,x}$.  In particular, $\Psi_{\ll_N k}(x,t)$ is uniformly bounded in $t, x, N,$ and $k$.  Since $E_N(x) \to e^{-ix}$ as $N \to \infty$ uniformly on compact sets, it follows that for $N$ sufficiently large,
\begin{equation*}
    |E_N(\Psi_{\ll_N k}(x,t))| \sim 1
\end{equation*}
uniformly in $x,t,$ and $k$.  Combining this with the bound~\eqref{eqn:B-k-bd}, we find that
\begin{equation*}\begin{split}
    \lVert \chi_j^+(x) {w}_k^+ \rVert_{L^\infty} \sim& \lVert \chi_j^+(x) \tilde{w}_k^+ \rVert_{L^\infty} + O\left(\lVert \chi_j^+(x) B_{k,N}^+(w,w) \rVert_{L^\infty}\right)\\
    \sim& \lVert \chi_j^+(x) \tilde{w}_k^+ \rVert_{L^\infty} + O\left(2^{-(2+\epsilon)j} 2^{-k} \jBra{2^{\min(\epsilon, 1/100) k}}\right)
\end{split}\end{equation*}
and summing over $k > k_0 = -\frac{1-\epsilon}{2}j$ shows that
\begin{equation*}
    \sum_{k > k_0} \lVert \chi_j^+(x) {w}_k^+ \rVert_{L^\infty} \sim \sum_{k > k_0} \lVert \chi_j^+(x) \tilde{w}_k^+ \rVert_{L^\infty} + O\left(2^{(-\frac{3}{2}-\frac{\epsilon}{2})j}\right)
\end{equation*}
since the error term coming from $B^+_{k,N}(w,w)$ is lower order than what we require for~\eqref{eqn:decay-hypo-improved}, the desired result will follow once we show that
\begin{equation}\label{eqn:tildew-req-bds}
    \sum_{k > k_0} \lVert \chi_j^+(x) \tilde{w}_k^+ \rVert_{L^\infty} \lesssim 2^{-(1+\frac{3}{2}\epsilon)j}
\end{equation}

\subsubsection{Spatial decay estimates for $\tilde{w}_k^+$}

We will now prove estimates for $\tilde{w}_k^+$ that are compatible with~\eqref{eqn:tildew-req-bds}.  To do this, we will use the Duhamel representation (which follows from~\eqref{eqn:tilde-w-def} and~\eqref{eqn:NG-trans-ref})
\begin{subequations}\label{eqn:tilde-w-duhamel-exp}\begin{align}
    \chi_j^+(x) \tilde{w}_k^+(t) =& i\chi_j^+(x) \int_t^\infty e^{(s-t)(\partial_x - i\partial_x^2)} \mathcal{B}^{+,\text{rem}}_{k,N} \frac{(-i\Psi_{\ll_N k})^N}{N!}\;ds\label{eqn:tw-calBrem-term}\\
    &+i\chi_j^+(x) \int_t^\infty e^{(s-t)(\partial_x - i\partial_x^2)} C^{+}_{k,N} E_{N-1}(\Psi_{\ll_N k}) \;ds\label{eqn:tw-C-term}\\
    &+i\chi_j^+(x) \int_t^\infty e^{(s-t)(\partial_x - i\partial_x^2)} \tilde{C}^{+}_{k,N} \frac{(-i\Psi_{\ll_N k})^N}{N!}\;ds\label{eqn:tw-tC-term}\\
    &+i\chi_j^+(x) \int_t^\infty e^{(s-t)(\partial_x - i\partial_x^2)} Q^{+}_{k,N}  E_{N-1}(\Psi_{\ll_N k}) \;ds\label{eqn:tw-Q-term}
\end{align}\end{subequations}
Thus, it suffices to estimate each these integrals.  Before we begin, we observe that because of the $N$ dependent frequency localization of $\Psi$, each of the integrals is localized to positive frequencies $\xi \sim 2^k$, so we may apply the results of~\Cref{cor:Schro-prop}.

For~\eqref{eqn:tw-calBrem-term}, we note that by~\eqref{eqn:Phi-BO},
\begin{equation*}
    \lVert \Psi_{\ll_N k} \rVert_{L^\infty} \lesssim 1, \qquad \qquad \lVert \chi_j^+(x) \Psi_{\ll_N k} \rVert_{L^\infty} \lesssim 2^{\epsilon j}
\end{equation*}
so the $(\Psi_{\ll_N k})^N$ term will provide enough extra decay (for $N$ large) to compensate for the slower decay of the quadratic remainder term $B_{k,N}^{+,\text{rem}}(w,w)$.  To make this rigorous, we first introduce the dyadic (spatial) decomposition, and then apply the results of~\Cref{cor:Schro-prop} to obtain
\begin{equation*}\begin{split}
    \lVert \eqref{eqn:tw-calBrem-term} \rVert_{L^\infty} \lesssim_N& \left\lVert \chi_j^+(x) \int_t^\infty e^{(s-t)(\partial_x - i\partial_x^2)} P^+_{[k\pm20]} \chi_{\lesssim j}^+(x) \mathcal{B}^{+,\text{rem}}_{k,N} \Psi_{\ll_N k}^N\;ds \right\rVert_{L^\infty}\\
    &+ \sum_{\ell \geq j-9} \left\lVert \chi_j^+(x) \int_t^\infty e^{(s-t)(\partial_x - i\partial_x^2)} P^+_{[k\pm20]} \chi^+_{\ell}(x) \mathcal{B}^{+,\text{rem}}_{k,N} \Psi_{\ll_N k}^N\;ds\right\rVert_{L^\infty}\\
    \lesssim_{N,M}& \int_t^\infty \min\left(2^{-M/2j}, (s-t)^{-M/2} \jBra{2^k}^{-2}\right) \lVert \mathcal{B}_{k,N}^{+,\text{rem}} \rVert_{L^1} \lVert \Psi \rVert_{L^\infty}^N\;ds\\
    &+ \sum_{\ell \geq j - 9} \int_{t+2^{\ell+10}\jBra{2^k}^{-1}}^\infty (s-t)^{-M/2}\jBra{2^k}^{-{M/2}} \lVert \mathcal{B}_{k,N}^{+,\text{rem}} \rVert_{L^1} \lVert \Psi \rVert_{L^\infty}^N\;ds\\
    &+ \sum_{\ell \geq j - 9} \int_t^{t+2^{\ell+10}\jBra{2^k}^{-1}}\!\!\!\!\!\!\!\!\!\! (s-t)^{-1/2} 2^{\ell/2} \lVert \chi^+_{\sim \ell}\mathcal{B}_{k,N}^{+,\text{rem}} \rVert_{L^2} \lVert \chi^+_{\sim \ell} \Psi_{\ll_N k} \rVert_{L^\infty}^N\;ds
\end{split}\end{equation*}
The leading order contribution comes from the last term.  Using~\eqref{eqn:calB-rem-bd}, we estimate
\begin{equation}
    \lVert \eqref{eqn:tw-calBrem-term} \rVert_{L^\infty} \lesssim \sum_{\ell > j-9} 2^{-(1/2+(N+1)\epsilon)\ell} c_k \lesssim 2^{-(1/2+(N+1)\epsilon)j} c_k \label{eqn:tw-calBrem-bd}
\end{equation}
which is compatible with~\eqref{eqn:tildew-req-bds} for $N$ sufficiently large (depending on $\epsilon$).  Turning to~\eqref{eqn:tw-C-term}, we use~\eqref{eqn:C-k-bd} to find that
\begin{equation*}\begin{split}
    \lVert \eqref{eqn:tw-C-term} \rVert_{L^\infty} \!\!\lesssim&\!\! \sum_{\ell \geq j - 9} \left\lVert \int_t^{t + 2^{\ell+10}\jBra{2^k}^{-1}} \!\!\!\!\!\!\!\!\!\!\!e^{(s-t)(\partial_x - i \partial_x^2)} P_{[k\pm20]} \chi_\ell^+(x) C_{k,N}^+ E_{N-1}(\Psi_{\ll_N k})\;ds\right\rVert_{L^\infty}\\
    &+ \{\text{better}\}\\
    \lesssim& 2^{-(3/2 + \epsilon)j} \left( 2^{(2\min(1/100, \epsilon) -1/2)k}, 2^{-k}\right)
\end{split}\end{equation*}
Since $\tilde{C}_{k,N}^+$ obeys the same bounds as $C_{k,N}^+$, we also get the same bound for~\eqref{eqn:tw-tC-term}. Summing over $k > k_0$ gives
\begin{equation}\label{eqn:tw-C-bd}
    \sum_{k > k_0} \lVert \eqref{eqn:tw-C-term} \rVert_{L^\infty} + \lVert \eqref{eqn:tw-tC-term} \rVert_{L^\infty} \lesssim 2^{-(1 + 3/2\epsilon)j}
\end{equation}
as required by~\eqref{eqn:tildew-req-bds}.  Finally, for the quartic terms~\eqref{eqn:tw-Q-term}, we find that
\begin{equation*}\begin{split}
    \lVert \eqref{eqn:tw-Q-term} \rVert_{L^\infty} \!\!\lesssim&\!\! \sum_{\ell \geq j - 9} \left\lVert \int_t^{t + 2^{\ell+10}\jBra{2^k}^{-1}} \!\!\!\!\!\!\!\!\!\!\! e^{(s-t)(\partial_x - i \partial_x^2)} P_{[k\pm 20]} \chi_\ell^+(x) Q_{k,N}^+ E_{N-1}(\Psi_{\ll_N k})\;ds\right\rVert_{L^\infty}\\
    &+ \{\text{better}\}\\
    \lesssim& 2^{-(5/2 + \epsilon)j} 2^{-k}\jBra{2^{(\min(1/100, \epsilon) - 1/2)k}}
\end{split}\end{equation*}
so
\begin{equation}\label{eqn:tw-Q-bd}
    \sum_{k > k_0} \lVert \eqref{eqn:tw-Q-term} \rVert_{L^\infty}  2^{-(2+ 3/2\epsilon)j}
\end{equation}
Combining~\eqref{eqn:tw-calBrem-bd},~\eqref{eqn:tw-C-bd} and~\eqref{eqn:tw-Q-bd} gives~\eqref{eqn:tildew-req-bds} and completes the proof of~\Cref{thm:main}.

\subsection{Modifications to prove~\texorpdfstring{\Cref{thm:main-symb-bds}}{Theorem 3}}\label{sec:symb-bds}
The same argument given above can be modified to prove~\Cref{thm:main-symb-bds}.  The basic bootstrapping strategy remains unchanged, but some of the terms in the argument must be modified.  We will briefly sketch the modifications.

\subsubsection{Spatial decay at low frequency}

By differentiating~\eqref{eqn:BO-moving-frame}, we see that
\begin{equation*}
    (\partial_t - \partial_x - H\partial_x^2) \partial_x^m w = \partial_x^{m+1} (w^2)
\end{equation*}
Arguing as in~\Cref{sec:sd-low-freq} and using $a = m+1$ in~\eqref{eqn:low-freq-left-waves}, we see that
\begin{equation}
    \lVert \chi_j^+(x) \partial_x^m P_{\leq k_0} w \rVert_{L^\infty} \lesssim \min(2^{-(1+3/2\epsilon)j}, 2^{-(m+2)j})
\end{equation}

\subsubsection{Reduction via normal forms}

Differentiating~\eqref{eqn:NFGT-def}, we see that
\begin{equation*}\begin{split}
    \partial_x v_k^+    =& (\partial_x u_k^+ + \partial_x B_{k,N}^+(u,u)) E_{N}(\Phi_{\ll_N k})\\
                         &- i u_{\ll_N k} (u_k^+ + B_{k,N}^+(u,u)) E_{N-1}(-\Phi_{\ll_N k})
\end{split}\end{equation*}
Recalling the Leibnitz rule for pseudoproducts and noting that $u_k^+ = \partial_x^{-1} \partial_x u_k^+$, we see that this is of the form
\begin{equation*}
    \partial_x v_k^+    = \partial_x u_k^+ E_{N}(\Phi_{\ll_N k}) + \{\text{pseudoproducts multiplied by } E_M(\Phi_{\ll_N k})\}
\end{equation*}
Higher derivatives of $v_k^+$ also have this form.  Thus, passing to a moving reference frame, we find that
\begin{equation*}
    \lVert \chi_j^+(x) \partial_x^m w_k^+ \rVert_{L^\infty} = \lVert \chi_j^+(x) \partial_x^m \tilde{w}_k^+ \rVert_{L^\infty} + \{\text{lower order}\}
\end{equation*}
so the required bound~\eqref{eqn:symb-decay-bd} will follow if we can prove appropriate bounds on $\partial_x^m \tilde{w}_k^+$.

\subsubsection{Spatial decay at high frequency}

Differentiating~\eqref{eqn:NG-trans-ref}, we find that
\begin{equation*}\begin{split}
	(i\partial_t- i\partial_x - \partial_x^2) \partial_x^m w_k^+ =& i^N\sum_{a=0}^m C_{a,m} \partial_x^a \mathcal{B}_{k,N}^{+,\text{rem}}(u,u) \partial_x^{m-a}\frac{\Phi_{\ll_N k}}{N!} \\
    &+ \{\text{better}\}\\
    =& \partial_x^m \mathcal{B}_{k,N}^{+,\text{rem}}(u,u)\frac{\Phi_{\ll_N k}}{N!} + \{\text{better}\}
\end{split}\end{equation*}
where $\{\text{better}\}$ denotes pseudoproducts of differential order $0$ which are cubic or higher order in $u, \partial_x u, \cdots, \partial_x^m u$ -- these terms can be estimated using straightforward modifications of the previous arguments.  For the remaining terms (which are bilinear in $u$), we distribute the derivative to find that
\begin{equation*}
    \partial_x^m \mathcal{B}_{k,N}^{+,\text{rem}}(u,u) =  (H+i)\partial_x^{m+1} u_{\ll_N k} u_k^+ + 2iu_{\ll_N k} \partial_x^{m+1} u_k^+ + \{\text{better}\}
\end{equation*}
where the $\{\text{better}\}$ terms contain only derivatives of order $m$ or lower.  The main terms obey the same bounds given in~\eqref{eqn:calB-rem-bd-L1} (and hence can be handled using the decay of $\Psi_{\ll_N k}^N$ as in~\eqref{eqn:tw-calBrem-bd}), while the remaining terms satisfy even better bounds using~\eqref{eqn:symb-decay-hypo}.

\bibliography{sources}
\bibliographystyle{plain}

\end{document}